\newtheorem{theorem}{Theorem}
\newtheorem{example}{Example}
\newtheorem{proposition}[theorem]{Proposition}
\newtheorem{remark}[theorem]{Remark}
\newenvironment{proof}[1][Proof]{\noindent\textbf{#1.} }{\ \rule{0.5em}{0.5em}}
\begin{document}

\title{Distinguishing and integrating aleatoric and epistemic variation in
uncertainty quantification}
\author{Kamaljit Chowdhary\thanks{%
Division of Applied Mathematics, Brown University, Providence, RI, 02912,
USA. Research supported in part by the Air Force Office of Scientific
Research (FA9550-07-1-0425).} \thinspace and Paul Dupuis\thanks{%
Lefschetz Center for Dynamical Systems, Division of Applied Mathematics,
Brown University, Providence, RI, 02912, USA. Research supported in part by
the National Science Foundation (DMS-1008331), the Department of Energy
(DE-SCOO02413), and the Air Force Office of Scientific Research
(FA9550-07-1-0425 and FA9550-09-1-0378).}}
\maketitle

\begin{abstract}
Much of uncertainty quantification to date has focused on determining the
effect of variables modeled probabilistically, and with a known
distribution, on some physical or engineering system. We develop methods to
obtain information on the system when the distributions of some variables
are known exactly, others are known only approximately, and perhaps others
are not modeled as random variables at all. \ The main tool used is the
duality between risk-sensitive integrals and relative entropy, and we obtain
explicit bounds on standard performance measures (variances, exceedance
probabilities) over families of distributions whose distance from a nominal
distribution is measured by relative entropy. The evaluation of the
risk-sensitive expectations is based on polynomial chaos expansions, which
help keep the computational aspects tractable.
\end{abstract}

\section{Introduction}

\textit{Uncertainty quantification} refers to a broad set of techniques for
understanding the impact of uncertainties in complicated mechanical and
physical systems. In this context {}\textquotedblleft
uncertainty\textquotedblright {}\ can take on many meanings, but we follow
the convention of dividing uncertainty into aleatoric and epistemic
categories. In short, aleatoric uncertainty refers to inherent uncertainty
due to stochastic or probabilistic variability. This type of uncertainty is 
\emph{irreducible} in that there will always be positive variance since the
underlying variables are truly random. Epistemic uncertainty refers to
limited knowledge we may have about the model or system. This type of
uncertainty is \emph{reducible} in that if we have more information, e.g.,
take more measurements, then this type of uncertainty can be reduced.
However, for many problems where uncertainty quantification is important,
the acquisition of data is difficult or expensive. The epistemic uncertainty
cannot be removed entirely, and so one needs modeling and computational
techniques which can also accommodate this form of uncertainty.

Much of the work to date has focused on aleatoric uncertainty and what one
might call the propagation of uncertainty. Here, one characterizes
uncertainty concerning one or more elements of a physical system via some
probability distribution, and attempts to quantify how that uncertainty will
be propagated throughout the system under its constitutive equations and
laws. The simplest and most straightforward method to characterize this
output distribution would be to sample from the input probability
distribution, solve the system equations, and thereby produce output samples
(i.e., standard Monte Carlo). Although this is very simple conceptually, it
can be far from practical, especially when it is computationally intensive
to construct the mapping that takes input variables to output.
Straightforward schemes such as Monte Carlo would require that the system be
solved for each sample of the random variable, a situation that is often not
practical. In recent years efficient computational methods have been
developed to calculate particular functionals of the induced distribution
that may be required for a particular application (e.g., covariances or
error probabilities). Most recently, polynomial chaos has become popular as
an efficient method for approximating the distribution of the output
variable.

We note that random variables with a well-defined and given distribution are
often used in this context even when there is no justification for their
use, as in the case of modeling error. In other words, it is (perhaps
implicitly) assumed that epistemic uncertainty can be modeled by aleatoric
uncertainty. One reason is that, at least as they have been developed to
date, most computational techniques (e.g., polynomial chaos and Monte Carlo)
are based on the assumption that the user can identify some
{}\textquotedblleft appropriate\textquotedblright {}\ distribution for each
uncertain aspect of the system, regardless of the type of uncertainty,
aleatoric or epistemic. If one is interested in just basic qualitative
properties of the system then this may not be a central issue, since
virtually any model of uncertainty will give information on the
sensitivities of the system. However, when the intended use of uncertainty
quantification is for regulatory assessment or some other application where
performance measures are sensitive to distributional assumptions, the issue
becomes much more important, and one should carefully distinguish how one
accounts for the two types of uncertainty.

The aim of the present paper is to describe an approach that (i) logically
distinguishes those aspects of uncertainty that are treated as stochastic
variability from other forms of uncertainty, (ii) in cases where a
stochastic model is theoretically valid but for which determination of the
distribution is not practical, gives bounds for performance measures that
are valid for explicitly identified families of distributions, and (iii) is
computationally feasible if ordinary uncertainty propagation is feasible.\
The intended audience is broad, including numerical analysts interested in
robust performance bounds as well as applied probabilists for whom certain
computational aspects of uncertainty quantification may be novel. Since a
typical reader might not be familiar with the terminology and methods from
both fields, we have included background material for both the probabilistic
and numerical analysis approaches used to make the paper broadly accessible.

The paper is organized as follows. In Section 2 we further discuss the
distinction between aleatoric and epistemic uncertainty, and explain some of
the limitations to achieving useful performance bounds in the presence of
epistemic uncertainty. Section 3 introduces risk-sensitive performance
measures and discusses their robust properties. Several hybrid forms are
introduced that are more useful than the simplest form when both aleatoric
and epistemic uncertainties are present. Various properties of the
risk-sensitive measures that are useful in applications (monotonicity,
optimization) are also discussed. Section 4 presents numerical examples,
reviews the computational methods used to evaluate the risk-sensitive
performance measures, and finishes with a subsection of discussion and
conclusions.

\section{Aleatoric and epistemic uncertainties}

As noted in the Introduction, \emph{uncertainty} can be divided into two
categories, aleatoric and epistemic. From the perspective of mathematical
formulation and modeling, aleatoric uncertainty is in some sense simpler. In
contrast, epistemic uncertainty can mean different things in different
contexts. \textit{Lack of knowledge} is an ambiguous term that encompasses
many different scenarios.

To illustrate, we consider an elementary example. Consider a system
described by some well-posed partial differential equation (PDE), but with
an uncertain boundary condition. The boundary condition is expressed in
terms of a random variable $X$ (e.g., $u_{x}(0,t)=X$) with a particular
fixed distribution, and the numerical analysis problem is to characterize
the induced distribution of the solution to the PDE at some time and
location (e.g., $u(x_{0},t_{0};X)$). Suppose we know that the boundary
condition is properly modeled as a random variable, but we do not know the
correct value of some parameter in that distribution. For example, based on
a central limit type argument, one might claim the distribution is known to
be Gaussian, but still unknown are the \textquotedblleft
true\textquotedblright\ mean and/or variance. In this case, the \emph{lack
of knowledge} is this missing information about the parameters of the
distribution. The lack of information regarding these parameters is a form
of epistemic uncertainty. Hence in this example aleatoric and epistemic
uncertainty are mingled. Assuming that samples are available, one could use
the empirical mean and variance from data to approximate the true mean and
variance, and thus reduce this uncertainty. However, in the common situation
where sampling is necessarily limited, some epistemic uncertainty is
inherent in the model due to practical limitations on data acquisition and
modeling.

Another type of epistemic uncertainty is the omission of important aspects
of the system model. This form is possibly the most difficult to quantify.
For example, there might be a hidden random variable in the model. In the
PDE example we have assumed the boundary condition is modeled via a random
variable, but there may be other coefficients in the model that are random
but treated as constants. It is also frequently true that the mathematical
model used for the system is only approximately true. For example, the model
might simplify the geometry of the true system, nonlinearities may be
approximated by linear relations, etc. In the context of the PDE example,
boundary conditions of Dirichlet form were chosen, when in fact a more
realistic model might use a mixed form (e.g., Robin boundary conditions).

For almost all forms of epistemic uncertainty there is little justification
for the use of random variables to model the uncertainty. Nonetheless, it is
common practice to do just that, and in practice randomness and random
variables are used in many situations to account for errors in the physical
model or modeling ignorance. The reasons were mentioned previously--that
basic distributional properties of the output (e.g., variance) will still
provide a reasonable sensitivity analysis for the problem, and existing
computational methods are largely based on aleatoric uncertainty. \ However,
with little justification for the use of any particular distribution (or
even the use of randomness at all), in more critical applications one may
insist on rigorous bounds on performance that are valid for a particular
family of distributions. In the extreme case where no probabilistic model is
considered acceptable, one may wish to only prescribe bounds on certain
parameters, and then obtain tight bounds on performance over all values of
the parameters that satisfy the bounds.

Thus there are many different types of uncertainty that one should account
for in an analysis of the effects of uncertainty. These include: (i)
aleatoric with known distribution; (ii) aleatoric with partly known
distribution (mingled aleatoric and epistemic); (iii) epistemic for which
one is willing to model by a family of aleatoric uncertainties, and (iv)
epistemic where one is only willing to place bounds on the uncertainties. As
remarked in the Introduction, this paper will introduce an approach that
allows these uncertainties to be handled within a single framework that can
exploit computational methods originally developed just for the treatment of
aleatoric uncertainties.

\section{Duality for exponential integrals}

Our development of performance measures that distinguish forms of
uncertainty, and which in particular allow for robustness with respect to
epistemic uncertainties, depends on a duality relation between exponential
integrals and relative entropy. Its first use along these lines but with
regard to estimation appears to be in \cite{boejampet}, and for optimization
in \cite{dupjampet}. We first state the basic duality result, and then
define two functionals which will allow aleatoric and epistemic
uncertainties to be analyzed simultaneously but at the same time
differentiated.

The general duality is stated for random variables that take values in a
Polish space (i.e., a complete, separable metric space) $\mathcal{X}$. The
associated $\sigma $-algebra is the Borel $\sigma $-algebra. A typical
example of $\mathcal{X}$ for our purposes is a closed subset of some
Euclidean space $\mathbb{R}^{d}$. Let $\mathcal{P}(\mathcal{X})$ denote the
collection of probability measures on $\mathcal{X}$, and let $\mu \in $ $%
\mathcal{P}(\mathcal{X})$. Given any $\nu \in $ $\mathcal{P}(\mathcal{X})$
that is absolutely continuous with respect to $\mu $, we define the relative
entropy of $\nu $ with respect to $\mu $ by%
\begin{equation*}
R\left( \nu \left\Vert \mu \right. \right) =\int_{\mathcal{X}}\log \left( 
\frac{d\nu }{d\mu }(x)\right) \nu (dx)
\end{equation*}%
whenever $\log (d\nu /d\mu (x))$ is integrable with respect to $\nu $. In
all other cases $R\left( \nu \left\Vert \mu \right. \right) $ is defined to
be $\infty $.

Relative entropy defines a mapping $(\nu ,\mu )\rightarrow R\left( \nu
\left\Vert \mu \right. \right) $ from $\mathcal{P}(\mathcal{X})^{2}$ to $%
\mathbb{R}\cup \infty $. This mapping has a number of very attractive
properties. For example, relative entropy is non-negative, with $R\left( \nu
\left\Vert \mu \right. \right) =0$ if and only if $\nu =\mu $. In addition,
the mapping is jointly convex and lower semicontinuous \cite[Lemma 1.4.3]%
{dupell4}. A property of particular interest for our purposes is the
following variational formula for exponential integrals, which can be
considered as an infinite dimensional version of the Legendre transform.
Given any bounded and continuous function $F:\mathcal{X}\rightarrow \mathbb{R%
}$ and any $c\in (0,\infty )$, 
\begin{equation}
\Lambda _{c}\doteq \frac{1}{c}\log \int_{\mathcal{X}}e^{cF(x)}\mu
(dx)=\sup_{\nu \in \mathcal{P}(\mathcal{X})}\left[ -\frac{1}{c}R\left( \nu
\left\Vert \mu \right. \right) +\int_{\mathcal{X}}F(x)\nu (dx)\right] .
\label{RE_duality}
\end{equation}%
For a proof see \cite[Proposition 1.4.2]{dupell4}. The duality continues to
hold as stated if $F$ is bounded from above, and also when bounded from
below if one restricts the supremum to $\nu \in \mathcal{P}(\mathcal{X})$
for which $R\left( \nu \left\Vert \mu \right. \right) <\infty $ \cite[%
Proposition 4.5.1]{dupell4}.

As an elementary example on how such a formula can be useful, suppose that $%
F $ is a performance measure (e.g., variance or an error probability), and
that $\mu $ is a model for some random phenomena. We consider $\mu $ to be
the \textit{nominal model}, e.g., our best guess. However, we are uncertain
if this is the correct model, and would like a measure of performance that
also holds for alternative models, but with a penalty for deviation from the
nominal model. $\Lambda _{c}$ is just such a performance measure, since by (%
\ref{RE_duality}), for any alternative model $\nu $ the bound 
\begin{equation*}
\int_{\mathcal{X}}F(x)\nu (dx)\leq \Lambda _{c}+\frac{1}{c}R\left( \nu
\left\Vert \mu \right. \right)
\end{equation*}%
applies. Hence we obtain bounds on performance over a family of alternative
models. The parameter $c$ allows one to balance robustness with respect to
possible model inaccuracies against tighter bounds. In particular, as $c$
tends to $0$, $\Lambda _{c}$ converges to $\int_{\mathcal{X}}F(x)\mu (dx)$,
which is the performance measure under the nominal model, but in the limit
the bound is meaningful only when $\nu =\mu $.

Suppose that a bound on performance over a specific family of distributions
is needed. Let $R^{\ast }$ denote the maximum of relative entropy with
respect to the nominal model over this family. Then the tightest possible
bound is obtained by minimizing 
\begin{equation*}
\Lambda _{c}+\frac{1}{c}R^{\ast }
\end{equation*}%
over $c>0$. We show in Proposition \ref{prop2} below that this function has
only one local minimum over $c\in (0,\infty ]$, and thus the global minimum
is easy to compute.

Functionals of the exponential form that appears in the definition of $%
\Lambda _{c}$ are sometimes called \textit{risk-sensitive} because the
exponential amplifies the effect of any large values of the performance
measure $F$. In the next two sections we consider two different hybrid
risk-sensitive functionals that will allow aleatoric and epistemic variables
to be combined and yet differentiated.

\subsection{Two hybrid forms}

\label{Section:RSforms}

In this section and the next random variables with a \emph{known}
distribution will take values in a Polish space $\mathcal{X}$. Variables
whose distribution is \emph{not known} or are otherwise of the epistemic
variety take values in the space $\mathcal{Y}$.

The performance measure of interest for some given problem is assumed to be
of the form%
\begin{equation}
\int_{\mathcal{X}}\int_{\mathcal{Y}}F(x,y)\gamma (dy)\mu (dx),
\label{std_cost}
\end{equation}%
where $\mu $ (resp., $\gamma $) is a probability measure on $\mathcal{X}$
(resp., $\mathcal{Y}$). If $X$ and $Y$ are independent random variables with
distributions $\mu $ and $\gamma $, then $F(X,Y)$ represents both the
performance measure (e.g., a second moment) as well as the underlying
physical or mechanical system that maps these aleatoric and epistemic inputs
into outputs. The analogous ordinary risk-sensitive performance measure is 
\begin{equation}
\Lambda _{c}=\frac{1}{c}\log \int_{\mathcal{X}}\int_{\mathcal{Y}%
}e^{cF(x,y)}\gamma (dy)\mu (dx).  \label{rs_cost}
\end{equation}%
Neither of the measures (\ref{std_cost}) or (\ref{rs_cost}) differentiate
the variables according to type (aleatoric or epistemic) and given that the
performance measure of interest is actually $F$, the use of a risk-sensitive
version of the cost is not well motivated for the aleatoric variables.
Indeed, use of this measure will give bounds that are robust with respect to
variations on a distribution that is known, and obviously such bounds will
not be as tight as possible.

The first form we consider for a hybrid measure is%
\begin{equation}
\Lambda _{c}^{1}=\frac{1}{c}\log \int_{\mathcal{Y}}e^{\int_{\mathcal{X}%
}cF(x,y)\mu (dx)}\gamma (dy).  \label{hybrid1}
\end{equation}%
Note that by Jensen's inequality (applied to the convex function $\exp $),
this is smaller than (and in general strictly smaller than) $\Lambda _{c}$.
Using the relative entropy representation for exponential integrals given in
(\ref{RE_duality}) [but with $\mathcal{Y}$ in place of $\mathcal{X}$ and
letting $\int_{\mathcal{X}}cF(x,y)\mu (dx)$ be the cost function], it
follows that for any distribution $\theta (dy)$ 
\begin{equation}
\int_{\mathcal{Y}}\int_{\mathcal{X}}F(x,y)\mu (dx)\theta (dy)\leq \frac{1}{c}%
R\left( \theta (dy)\left\Vert \gamma (dy)\right. \right) +\Lambda _{c}^{1}.
\label{first}
\end{equation}%
This gives a bound on the performance measure for an arbitrary distribution
on $Y$, but with the distribution on $X$ equal to the known true
distribution. The distributions thus play very different roles. In
particular, we think of $\gamma $ as a \emph{nominal }distribution of $Y$,
which should be distinguished from a possible \textit{true} distribution.
The risk sensitive functional $\Lambda _{c}^{1}$, whose numerical evaluation
can be carried out by a variety of methods (including polynomial chaos
expansion as discussed below), is based on the nominal distribution. Through
the relative entropy duality, it will yield various bounds (depending on $c$%
) on a families of distributions, with the relative entropy distance the key
metric. Ideally one would consider the smallest family that contains the
\textquotedblleft true\textquotedblright\ distribution (if it exists), and
get the tightest bound by optimizing over $c$. With the hybrid
risk-sensitive formulation, $\mu $ is allowed to be both the nominal
(computational) distribution and also the true distribution. \ 

The second form one could consider for a hybrid measure is 
\begin{equation}
\Lambda _{c}^{2}=\frac{1}{c}\int_{\mathcal{X}}\left[ \log \int_{\mathcal{Y}%
}e^{cF(x,y)}\gamma (dy)\right] \mu (dx).  \label{hybrid2}
\end{equation}%
Note that by Jensen's inequality (applied now to the concave function $\log $%
), this is again in general strictly smaller than $\Lambda _{c}$. The
relative entropy representation for exponential integrals gives 
\begin{equation*}
\int_{\mathcal{Y}}F(x,y)\theta (dy|x)\leq \frac{1}{c}R\left( \theta
(dy|x)\left\Vert \gamma (dy)\right. \right) +\frac{1}{c}\log \int_{\mathcal{Y%
}}e^{cF(x,y)}\gamma (dy).
\end{equation*}%
Here $\theta (dy|x)$ is any stochastic kernel on $\mathcal{Y}$ given $%
\mathcal{X}$, which is essentially a conditional distribution on $\mathcal{Y}
$ given $X=x$ (for the precise definition see \cite[page 35]{dupell4}).
Integrating over $\mathcal{X}$ gives 
\begin{equation*}
\int_{\mathcal{X}}\int_{\mathcal{Y}}F(x,y)\theta (dy|x)\mu (dx)\leq \frac{1}{%
c}\int_{\mathcal{X}}R\left( \theta (dy|x)\left\Vert \gamma (dy)\right.
\right) \mu (dx)+\Lambda _{c}^{2}.
\end{equation*}%
In the case where $\theta (dy|x)$ is independent of $x$, we obtain%
\begin{equation*}
\int_{\mathcal{X}}\int_{\mathcal{Y}}F(x,y)\theta (dy)\mu (dx)\leq \frac{1}{c}%
R\left( \theta (dy)\left\Vert \gamma (dy)\right. \right) +\Lambda _{c}^{2}.
\end{equation*}

Note that the second hybrid cost gives a more general bound, in that it
allows the alternative distribution on $Y$ (i.e., $\theta (dy|x)$) to depend
on the value taken by $X$. This additional flexibility comes at a cost, and
indeed we will show in the next subsection that the following inequalities
hold (typically in a strict fashion):

\begin{equation}
\int_{\mathcal{X}}\int_{\mathcal{Y}}F(x,y)\gamma (dy)\mu (dx)\leq \Lambda
_{c}^{1}\leq \Lambda _{c}^{2}\leq \Lambda _{c}.  \label{inequalities}
\end{equation}%
Hence if one is concerned with performance bounds for distributions on $Y$
that do not depend on the variable $X$, then the tightest bound is obtained
using $\Lambda _{c}^{1}$.

Before studying properties of the risk-sensitive measures, we note some
elementary generalizations that are possible. Suppose that the nominal
distribution of $X$ and $Y$ is not of product form. In the setting of the
first form, we could let $\gamma $ be the marginal distribution of $Y$ and
let $\mu (dx|y)$ be the conditional distribution of $X$ given $Y=y$. Using
the extended definition%
\begin{equation*}
\bar{\Lambda}_{c}^{1}=\frac{1}{c}\log \int_{\mathcal{Y}}e^{\int_{\mathcal{X}%
}cF(x,y)\mu (dx|y)}\gamma (dy),
\end{equation*}%
we obtain the bound%
\begin{equation*}
\int_{\mathcal{Y}}\int_{\mathcal{X}}F(x,y)\mu (dx|y)\theta (dy)\leq \frac{1}{%
c}R\left( \theta (dy)\left\Vert \gamma (dy)\right. \right) +\bar{\Lambda}%
_{c}^{1}.
\end{equation*}%
For the second form, we let $\mu $ be the marginal distribution of $X$ and
let $\gamma (dy|x)$ be the conditional distribution of $Y$ given $X=x$. With
the definition%
\begin{equation*}
\bar{\Lambda}_{c}^{2}=\frac{1}{c}\int_{\mathcal{X}}\left[ \log \int_{%
\mathcal{Y}}e^{cF(x,y)}\gamma (dy|x)\right] \mu (dx),
\end{equation*}%
we obtain%
\begin{equation*}
\int_{\mathcal{X}}\int_{\mathcal{Y}}F(x,y)\theta (dy|x)\mu (dx)\leq \frac{1}{%
c}\int_{\mathcal{X}}R\left( \theta (dy|x)\left\Vert \gamma (dy|x)\right.
\right) \mu (dx)+\bar{\Lambda}_{c}^{2}.
\end{equation*}

It is indeed sometimes useful to allow the distribution of one type of
variable to depend on the value taken by the other type of variable. As an
elementary example, consider the situation where an aleatoric variable
appears in a system, and while it is known that this variable has a Gaussian
distribution with mean zero, the variance of the variable is however not
known. Then the variance itself might be modeled as an uncertainty whose
distribution is not known precisely, i.e., as an epistemic uncertainty. In
this case $\bar{\Lambda}_{c}^{1}$ would be the relevant risk-sensitive
formulation. An example of this sort is presented in Subsection \ref%
{subsub:depend_uncertainties}.

\begin{remark}
\emph{In problems of regulatory assessment epistemic uncertainty can pose a
unique set of challenges. The collection of data and model validation are
often particularly difficult or expensive, while at the same time stringent
bounds on performance might be demanded. In these circumstances, the
framework presented here, wherein tight bounds are obtained for a known
family of alternative models, would seem very natural. Prior to the
collection of data or testing of samples to determine compliance, all
critical elements of the model, including selection of the nominal model,
size of the family of alternative models allowed by the relative entropy
bounds, and the performance measures that would be required to hold
uniformly for this family, would all be determined by negotiation among the
interested parties. This would allow various competing needs (e.g., expense
of model validation versus adequate protection of consumers) to be balanced
according to the interests of the participants.}
\end{remark}

\subsection{Properties of the risk-sensitive forms}

In this section we prove properties of the two hybrid risk-sensitive forms.
In particular we derive an inequality relating the two, and study the limit
as $c\rightarrow \infty $. For the results to hold as stated we often need $%
F $ be bounded from below. This is a mild assumption. Indeed, standard
measures of performance such as second moments and error probabilities
satisfy this condition.

The first proposition shows the inequality between the two hybrid forms.

\begin{proposition}
Assume that $F$ is bounded from below and consider the functionals defined
in (\ref{std_cost}), (\ref{rs_cost}), (\ref{hybrid1}) and (\ref{hybrid2}).
Then the inequalities (\ref{inequalities}) hold.
\end{proposition}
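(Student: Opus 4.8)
The plan is to establish the chain of four inequalities in (\ref{inequalities}) by treating each of the three comparisons separately, since each rests on a distinct application of Jensen's inequality to the appropriate convexity or concavity. Throughout, the hypothesis that $F$ is bounded from below is what guarantees the inner integrals are well-defined (possibly $+\infty$) and that the exponential integrals and logarithms are meaningful, so I would note this at the outset.

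First I would prove the leftmost inequality, $\int_{\mathcal{X}}\int_{\mathcal{Y}}F(x,y)\gamma(dy)\mu(dx)\leq\Lambda_{c}^{1}$. Writing $G(y)\doteq\int_{\mathcal{X}}F(x,y)\mu(dx)$, the definition (\ref{hybrid1}) reads $\Lambda_{c}^{1}=\frac{1}{c}\log\int_{\mathcal{Y}}e^{cG(y)}\gamma(dy)$. Since $\exp$ is convex, Jensen's inequality applied to the probability measure $\gamma$ gives $\int_{\mathcal{Y}}e^{cG(y)}\gamma(dy)\geq e^{c\int_{\mathcal{Y}}G(y)\gamma(dy)}$; taking $\frac{1}{c}\log$ of both sides (an increasing operation since $c>0$) yields $\Lambda_{c}^{1}\geq\int_{\mathcal{Y}}G(y)\gamma(dy)$, which is exactly the standard cost (\ref{std_cost}). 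The text already flags that this step is Jensen applied to $\exp$, so I expect it to be routine.

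Next I would treat the rightmost inequality, $\Lambda_{c}^{2}\leq\Lambda_{c}$. Here the definitions are $\Lambda_{c}^{2}=\frac{1}{c}\int_{\mathcal{X}}H(x)\mu(dx)$ with $H(x)\doteq\log\int_{\mathcal{Y}}e^{cF(x,y)}\gamma(dy)$, and $\Lambda_{c}=\frac{1}{c}\log\int_{\mathcal{X}}e^{H(x)}\mu(dx)$. Now the relevant operation is $\log$, which is concave, so Jensen gives $\int_{\mathcal{X}}H(x)\mu(dx)=\int_{\mathcal{X}}\log\left(e^{H(x)}\right)\mu(dx)\leq\log\int_{\mathcal{X}}e^{H(x)}\mu(dx)$; dividing by $c>0$ gives $\Lambda_{c}^{2}\leq\Lambda_{c}$, again matching the remark in the text that this is Jensen for the concave $\log$.

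The middle inequality, $\Lambda_{c}^{1}\leq\Lambda_{c}^{2}$, is the one I expect to be the main obstacle, since it is the only comparison the text does not already annotate with a one-line justification. Both sides involve $\frac{1}{c}$ times something, so it reduces to showing $\log\int_{\mathcal{Y}}e^{c\int_{\mathcal{X}}F(x,y)\mu(dx)}\gamma(dy)\leq\int_{\mathcal{X}}\log\int_{\mathcal{Y}}e^{cF(x,y)}\gamma(dy)\,\mu(dx)$. My plan is to view this as a statement that the functional $\mu\mapsto\log\int_{\mathcal{Y}}e^{c(\cdot)}\gamma(dy)$ is convex in its argument, i.e. to pull the $\mu$-average outside the cumulant generating functional. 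Concretely, I would apply the conditional form of Jensen's inequality, or equivalently the H\"older/Minkowski-type inequality for the $L^{p}(\gamma)$ norms: setting $g(x,y)=e^{cF(x,y)}$ and writing the inner integral against $\gamma$ as an expectation $E_{\gamma}$, the claim is $\log E_{\gamma}\left[\exp\left(\int_{\mathcal{X}}\log g(x,y)\,\mu(dx)\right)\right]\leq\int_{\mathcal{X}}\log E_{\gamma}[g(x,\cdot)]\,\mu(dx)$. This is precisely the convexity of the log-moment-generating (free-energy) functional $\Psi(f)\doteq\log\int_{\mathcal{Y}}e^{f(y)}\gamma(dy)$ evaluated along the mixture $f(y)=\int_{\mathcal{X}}cF(x,y)\mu(dx)$: since $\Psi$ is convex on functions and $\mu$ is a probability measure, $\Psi\left(\int_{\mathcal{X}}cF(x,\cdot)\mu(dx)\right)\leq\int_{\mathcal{X}}\Psi(cF(x,\cdot))\mu(dx)$, which is the desired inequality. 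The cleanest self-contained route is to cite the duality (\ref{RE_duality}), which exhibits $\Psi$ as a supremum of affine functionals $\nu\mapsto-R(\nu\Vert\gamma)+\int_{\mathcal{Y}}(\cdot)\,d\nu$ and hence manifestly convex, and then invoke Jensen for the convex $\Psi$; I would therefore structure the argument to lean on (\ref{RE_duality}) rather than on a bare H\"older computation.
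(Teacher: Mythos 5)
Your proposal is correct, and it reproduces the paper's treatment of the two outer inequalities exactly (Jensen for the convex $\exp$ under $\gamma$ gives the leftmost, Jensen for the concave $\log$ under $\mu$ gives the rightmost). For the middle inequality $\Lambda_{c}^{1}\leq \Lambda_{c}^{2}$ you rely on the same key lemma as the paper, namely the duality (\ref{RE_duality}), but you package the argument differently. The paper applies the duality twice to obtain exact variational formulas: $\Lambda_{c}^{1}$ as a supremum over $\mathcal{P}(\mathcal{Y})$, and $\Lambda_{c}^{2}$ as a supremum over stochastic kernels $\mathcal{P}(\mathcal{Y}|\mathcal{X})$ --- the latter requiring a measurable-selection argument so that near-optimizing $\theta(dy|x)$ can be chosen Borel measurably in $x$ --- and then concludes by comparing feasible sets, constant kernels being a subset of all kernels. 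You instead use the duality only to exhibit the free-energy functional $\Psi(f)=\log\int_{\mathcal{Y}}e^{f(y)}\gamma(dy)$ as a supremum of affine functionals and then invoke a Jensen inequality for $\Psi$ along the mixture $f=\int_{\mathcal{X}}cF(x,\cdot)\mu(dx)$. One caution: ``Jensen for a convex functional'' applied to an integral mixture in an infinite-dimensional function space is not an off-the-shelf consequence of bare convexity (convexity alone handles only finite convex combinations), so this step must be unpacked --- as your last sentence implicitly does --- via the sup-of-affine representation: for each fixed $\nu$ with $R(\nu\|\gamma)<\infty$, interchange the affine functional with the $\mu$-average by Tonelli (legitimate since $F$ is bounded below), bound pointwise in $x$ by the easy direction of (\ref{RE_duality}) (each fixed $\nu$ is a candidate in the supremum defining $\Psi(cF(x,\cdot))$), and then take the supremum over $\nu$. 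With that made explicit your proof is complete, and it is in fact leaner than the paper's: it needs no measurable-selection argument at all. What the paper's heavier route buys in exchange is the exact variational formula for $\Lambda_{c}^{2}$ over $x$-dependent alternative distributions, which is of independent interest in the surrounding discussion (it is what shows $\Lambda_{c}^{2}$ yields bounds for kernels $\theta(dy|x)$), whereas your argument establishes the proposition without producing that representation.
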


\begin{proof}
The only inequality which does not follow from Jensen's inequality is $%
\Lambda _{c}^{1}\leq \Lambda _{c}^{2}$. The relative entropy duality as
stated in (\ref{RE_duality}) applied to $\Lambda _{c}^{1}$ gives 
\begin{equation*}
\Lambda _{c}^{1}=\sup_{\theta \in \mathcal{P}(\mathcal{Y})}\left[ \int_{%
\mathcal{Y}}\int_{\mathcal{X}}F(x,y)\mu (dx)\theta (dy)-\frac{1}{c}R\left(
\theta (dy)\left\Vert \gamma (dy)\right. \right) \right] ,
\end{equation*}%
where $\mathcal{P}(\mathcal{Y})$ is the space of probability measures on $%
\mathcal{Y}$. The same representation applied to%
\begin{equation*}
\frac{1}{c}\log \int_{\mathcal{Y}}e^{cF(x,y)}\gamma (dy)
\end{equation*}%
gives 
\begin{equation*}
\frac{1}{c}\log \int_{\mathcal{Y}}e^{cF(x,y)}\gamma (dy)=\sup_{\theta \in 
\mathcal{P}(\mathcal{Y})}\left[ \int_{\mathcal{Y}}F(x,y)\theta (dy)-\frac{1}{%
c}R\left( \theta (dy)\left\Vert \gamma (dy)\right. \right) \right] .
\end{equation*}%
The supremum operation in the last display can be done in such a way that an
optimizing (or near optimizing) $\theta $ is a Borel measurable function of $%
x$, i.e., a stochastic kernel $\theta (dy|x)$ on $\mathcal{Y}$ given $%
\mathcal{X}$ \cite{dupell4}. Let $\mathcal{P}(\mathcal{Y}|\mathcal{X})$
denote the space of such stochastic kernels. Integrating the last display
with respect to $\mu $ gives 
\begin{equation*}
\Lambda _{c}^{2}=\sup_{\theta \in \mathcal{P}(\mathcal{Y}|\mathcal{X})}\left[
\int_{\mathcal{X}}\int_{\mathcal{Y}}F(x,y)\theta (dy|x)\mu (dx)-\frac{1}{c}%
\int_{\mathcal{X}}R\left( \theta (dy|x)\left\Vert \gamma (dy)\right. \right)
\mu (dx)\right] .
\end{equation*}%
Since $\mathcal{P}(\mathcal{Y})\subset \mathcal{P}(\mathcal{Y}|\mathcal{X})$%
, $\Lambda _{c}^{2}\geq \Lambda _{c}^{1}$ follows.\medskip
\end{proof}

The next proposition shows how to obtain tight bounds on the performance
over a family of distributions defined in terms of a maximum relative
entropy $B$.

\begin{proposition}
\label{prop2}Consider the functionals defined in (\ref{rs_cost}), (\ref%
{hybrid1}) and (\ref{hybrid2}), and let $D=\left\{ c:\Lambda _{c}<\infty
\right\} $ (resp., $D^{i}=\left\{ c:\Lambda _{c}^{i}<\infty \right\} ,i=1,2$%
). Assume that the interior of $D$ (resp., $D^{i}$) is nonempty. Then $%
\Lambda _{c}$ (resp., $\Lambda _{c}^{i}$) is differentiable on the interior
of $D$ (resp., $D^{i}$). Assume also that $F$ is bounded from below by zero.
Then $c\rightarrow \Lambda _{c}$ (resp., $c\rightarrow \Lambda _{c}^{i}$) is
nondecreasing for $c\geq 0$. Let $B>0$ be given. Then there is a unique $%
c\in (0,\infty ]$ at which 
\begin{equation*}
c\rightarrow \frac{1}{c}B+\Lambda _{c}\quad \left( \text{resp., }\frac{1}{c}%
B+\Lambda _{c}^{i}\right)
\end{equation*}%
attains a local minimum, where the statement that the minimum occurs at $%
c=\infty $ means that $\Lambda _{c}+B/c>\Lambda _{\infty }$ for a well
defined limit $\Lambda _{\infty }$ and all $c<\infty $.
\end{proposition}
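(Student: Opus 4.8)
The plan is to handle all three functionals uniformly by writing each as $\Lambda=\phi(c)/c$ for an appropriate convex function $\phi$ with $\phi(0)=0$. For $\Lambda_c$ take $\phi(c)=\log\int_{\mathcal{X}}\int_{\mathcal{Y}}e^{cF(x,y)}\gamma(dy)\mu(dx)$; for $\Lambda_c^{1}$ take $\phi(c)=\log\int_{\mathcal{Y}}e^{cG(y)}\gamma(dy)$ with $G(y)=\int_{\mathcal{X}}F(x,y)\mu(dx)$; and for $\Lambda_c^{2}$ take $\phi(c)=\int_{\mathcal{X}}\psi_x(c)\mu(dx)$ with $\psi_x(c)=\log\int_{\mathcal{Y}}e^{cF(x,y)}\gamma(dy)$. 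Each $\phi$ is a logarithmic moment generating function (or an $\mu$-average of such), hence convex by H\"older's inequality, with $\phi(0)=0$; its domain of finiteness is an interval, and $D$ (resp.\ $D^{i}$) is exactly the set of $c>0$ where $\phi$ is finite. The three assertions then reduce to general facts about $c\mapsto\phi(c)/c$.

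For differentiability, when $\phi$ is a genuine log-moment generating function (the cases $\Lambda_c$ and $\Lambda_c^{1}$) it is real-analytic, in particular $C^{1}$, on the interior of its domain by the standard theory of such transforms. For $\Lambda_c^{2}$ I would differentiate $\phi(c)=\int_{\mathcal{X}}\psi_x(c)\mu(dx)$ under the integral: each $\psi_x$ is smooth with $\psi_x'(c)$ the mean of $F(x,\cdot)$ under the exponentially tilted measure, and the interchange is justified by dominated convergence applied to difference quotients, which convexity sandwiches between difference quotients over a slightly larger subinterval of the interior; a $\mu$-integrable lower bound on $\psi_x$ (for instance $\psi_x(c)\geq c\int_{\mathcal{Y}}F(x,y)\gamma(dy)$ from Jensen, nonnegative once $F\geq0$) supplies the remaining domination.

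For monotonicity I use that a convex $\phi$ with $\phi(0)=0$ has $\phi(c)/c$ nondecreasing on $(0,\infty)$: for $0<c_{1}<c_{2}$ write $c_{1}=(c_{1}/c_{2})c_{2}+(1-c_{1}/c_{2})\cdot 0$, so $\phi(c_{1})\leq(c_{1}/c_{2})\phi(c_{2})$, i.e.\ $\phi(c_{1})/c_{1}\leq\phi(c_{2})/c_{2}$; the assumption $F\geq0$ additionally gives $\phi\geq0$, hence $\Lambda\geq0$, with $\Lambda\to\phi'(0^{+})$, the nominal mean, as $c\to0^{+}$. For the minimizer, set $\Phi(c)=B/c+\Lambda=(B+\phi(c))/c$, so $\Phi'(c)=(g(c)-B)/c^{2}$ with $g(c)=c\phi'(c)-\phi(c)$. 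Since $-g(c)$ is the value at the origin of the tangent line to $\phi$ at $c$, convexity (increasing slopes) forces these intercepts to decrease in $c$, so $g$ is nondecreasing; and $g(0^{+})=0$ because $\phi(c)\to0$ while $c\phi'(c)\to0$, using that exponential integrability together with $F\geq0$ makes $\phi'(0^{+})=\int_{\mathcal{X}}\int_{\mathcal{Y}}F\,\gamma(dy)\mu(dx)$ finite. Thus the sign of $\Phi'$ is governed entirely by where the nondecreasing $g$ crosses the level $B>0$.

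To turn this into a \emph{unique} local minimum rather than a flat valley of minima, I would invoke the dichotomy based on whether $\phi$ is affine. If $\phi$ is affine (the degenerate case where the relevant variable is almost surely constant), then $g\equiv0<B$, so $\Phi$ is strictly decreasing and its unique minimum is the limit at $c=\infty$, equal to the well-defined $\Lambda_{\infty}=\lim_{c\to\infty}\phi(c)/c$. Otherwise $\phi$ is strictly convex on the interior, $g$ is \emph{strictly} increasing, and $g(c)=B$ has at most one root $c^{*}$, at which $\Phi$ switches from strictly decreasing to strictly increasing, giving a unique interior minimizer; if $g$ never reaches $B$ on a finite domain then $\Lambda\to\infty$ at the right endpoint would contradict strict decrease, so that subcase forces the domain to be all of $[0,\infty)$ with $\Lambda_{\infty}<\infty$, and the minimum is again uniquely at $c=\infty$. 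The main obstacle I anticipate is precisely this last step: upgrading monotonicity of $g$ to uniqueness requires strict convexity of $\phi$ off the degenerate case, and correctly classifying the boundary behavior (finite versus infinite domain, and well-definedness of $\Lambda_{\infty}$) to decide between an interior minimizer and the minimizer at $c=\infty$; the differentiation under the integral for $\Lambda_c^{2}$ is the other technical point where the lower bound from $F\geq0$ does real work.
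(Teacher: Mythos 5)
Your proposal is correct and follows essentially the same route as the paper's proof: write each functional as $\phi(c)/c$ with $\phi$ convex and $\phi(0)=0$, observe that $g(c)=c\phi'(c)-\phi(c)$ is nondecreasing with $g(0^+)=0$, and locate the unique minimizer according to whether $g$ ever crosses the level $B$, sending $c\to\infty$ (or arguing at a finite right endpoint of the domain) otherwise. The two points where you are more careful than the paper---differentiation under the integral for $\Lambda_c^{2}$, whose $\phi$ is a $\mu$-mixture of log-moment generating functions rather than a log-moment generating function itself, and the affine/degenerate dichotomy needed before one may call $g$ strictly increasing---are precisely the steps the paper compresses into ``proofs for the other functionals are analogous'' and a citation of strict convexity, so your extra work is a genuine (if modest) tightening of the same argument.
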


\begin{proof}
To simplify we give the proof only for the case of $\Lambda _{c}$ and omit
the $y$ variable. Thus we consider $\Lambda _{c}=\frac{1}{c}\log \int_{%
\mathcal{X}}e^{cF(x)}\mu (dx)$. Proofs for the other functionals are
analogous. It is well known that $H(c)=\log \int_{\mathcal{X}}e^{cF(x)}\mu
(dx)$ is a convex function taking values in $\mathbb{R}\cup \left\{ \infty
\right\} $, and strictly convex and infinitely differentiable on the
interior of $\left\{ c:H(c)<\infty \right\} $ (see, e.g., \cite{var}).

Next we prove the monotonicity. Differentiating gives%
\begin{equation*}
H^{^{\prime }}(c)=\frac{\int_{\mathcal{X}}F(x)e^{cF(x)}\mu (dx)}{\int_{%
\mathcal{X}}e^{cF(x)}\mu (dx)}.
\end{equation*}%
Since $F\geq 0$ the derivative is non-negative, and convexity implies it is
non-decreasing. Using%
\begin{equation*}
\frac{1}{c}H(c)=\frac{1}{c}\left( H(c)-H(0)\right) =\frac{1}{c}%
\int_{0}^{c}H^{^{\prime }}(s)ds,
\end{equation*}%
it follows that $\Lambda _{c}=H(c)/c$ is nondecreasing for $c\geq 0$.

First assume that $M\doteq \sup D=\infty $, and observe that 
\begin{equation*}
\frac{d}{dc}\left[ \frac{1}{c}B+\frac{1}{c}H(c)\right] =\frac{1}{c^{2}}\left[
-B+cH^{^{\prime }}(c)-H(c)\right] .
\end{equation*}%
Since $H$ is strictly convex and $H^{^{\prime }}(0)\geq 0$, the mapping $%
f(c)\rightarrow cH^{^{\prime }}(c)-H(c)$ is monotone increasing and takes
the value $0$ at $c=0$ (see Figure 1).


\begin{figure}[h!]
\begin{center}
\includegraphics[width=2.64in,height=2.67in]{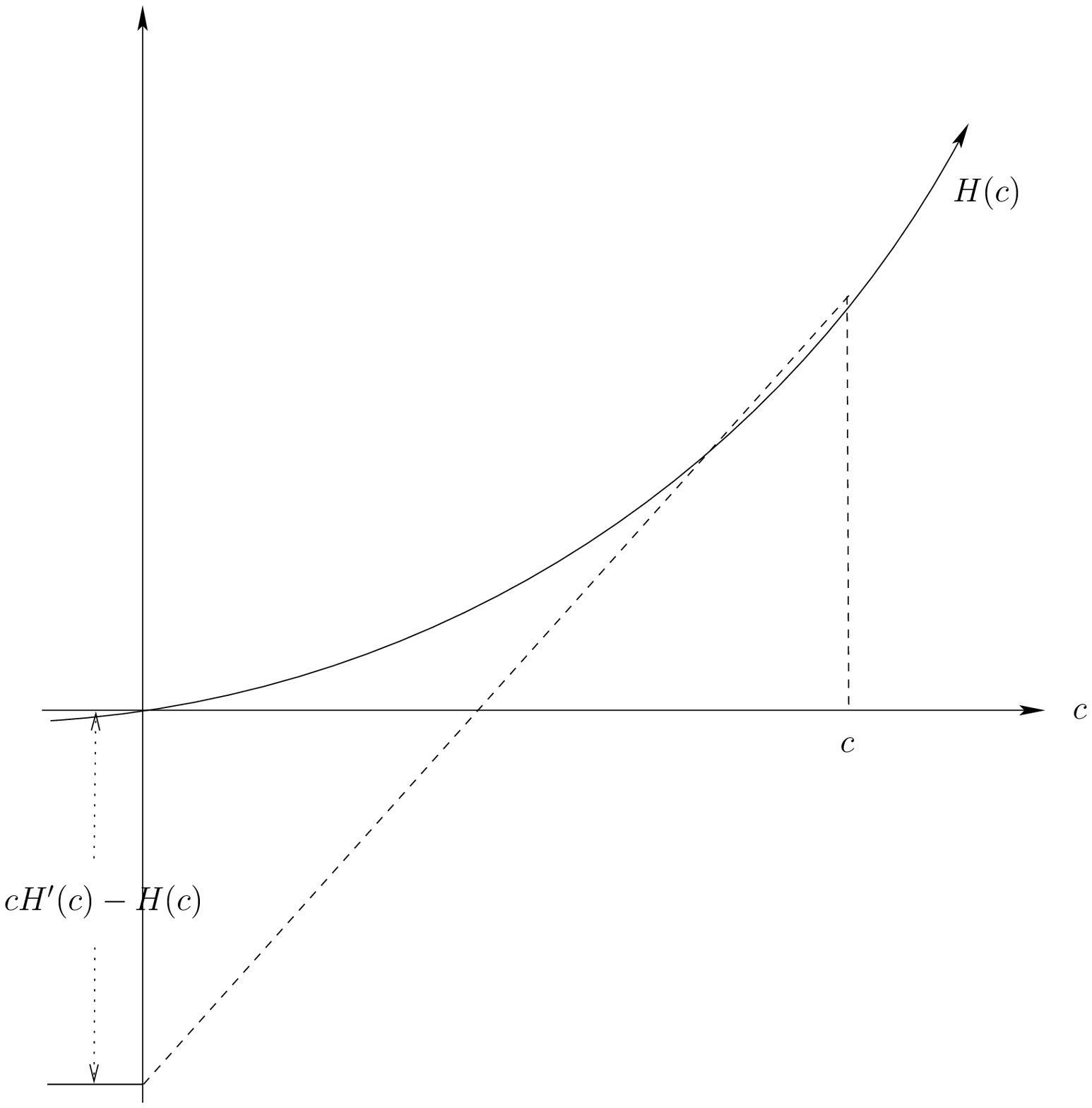}
\end{center}
\caption{$f(c)$ is monotone increasing}
\label{Monotone increasing}
\end{figure}

Let $K$ denote the limit of $f(c)$ as $c\rightarrow \infty $. If $K=\infty $
then there is a unique solution to 
\begin{equation*}
cH^{^{\prime }}(c)-H(c)=B,
\end{equation*}%
and we are done. The same is true if $0\leq B<K$. Hence the only case left
is when $B\geq K$. In this case $(B+H(c))/c$ is monotone decreasing for all $%
c\in \lbrack 0,\infty )$. Since $H(c)/c\geq H^{^{\prime }}(0)\geq 0$, there
is a well-defined limit $\Lambda _{\infty }$ that is necessarily the minimum.

Next assume that $M\doteq \sup D\in (0,\infty )$. We claim that in this case 
$cH^{^{\prime }}(c)-H(c)\uparrow \infty $ as $c\uparrow M$, and therefore we
can argue just as before. By monotone convergence it must be that $%
H(c)\uparrow \infty $ as $c\uparrow M$, which implies that necessarily $%
H^{^{\prime }}(c)\uparrow \infty $ as $c\uparrow M$. To prove the claim, let 
$0<\beta <c<M$. Then since $H^{^{\prime }}(s)$ is increasing%
\begin{equation*}
H(c)=\int_{0}^{c}H^{^{\prime }}(s)ds\leq \beta H^{^{\prime }}(\beta
)+(c-\beta )H^{^{\prime }}(c).
\end{equation*}%
This implies 
\begin{equation*}
cH^{^{\prime }}(c)-H(c)\geq \beta \left[ H^{^{\prime }}(c)-H^{^{\prime
}}(\beta )\right] .
\end{equation*}%
Letting $c\uparrow M$ and using that $\beta >0$, the claim is proved, thus
completing the proof of the theorem.
\end{proof}

\medskip

One of the situations of interest when aleatoric and epistemic variables
appear simultaneously is the case where all that is known regarding the
epistemic variables are bounds. It turns out that this problem is well posed
(i.e., the relevant performance criteria are finite) essentially in those
cases where $\Lambda _{\infty }^{1}<\infty $. In fact, when this is the case
the value of $\Lambda _{\infty }^{1}$ can be used to establish optimal
bounds subject to the constraint on the epistemic variables. For simplicity,
we assume that the set $A$ appearing in the statement of the following
theorem is bounded and that $\gamma $, the nominal distribution for the
epistemic variables, is the uniform distribution (in the limit $c\rightarrow
\infty $ the precise form of the nominal distribution is not important, and
in fact it is only the support of the distribution that matters in the
limit). If $A$ is not bounded one can use an appropriate distribution whose
support is all of $A$ (e.g., the exponential distribution could be used for $%
A=[0,\infty )$. The choice of $\gamma $ as uniform when $A$ is bounded is
simplest and also one that is convenient for most uses. A related statement
holds for $\Lambda _{\infty }^{2}$, but it is not as useful (at least in
this setting), since $\Lambda _{\infty }^{1}\leq \Lambda _{\infty }^{2}$.

\begin{theorem}
\label{thm:c_to_infty}Suppose that $A\subset \mathcal{Y}$ is bounded and the
closure of its interior and that $\gamma $ is the uniform distribution on $A$%
. Assume that $F$ is lower semicontinuous in $y$ for each $x\in \mathcal{X}$
and bounded from below. Define the risk-sensitive functional $\Lambda
_{c}^{1}$ by (\ref{hybrid1}) and let $\Lambda _{\infty
}^{1}=\lim_{c\rightarrow \infty }\Lambda _{c}^{1}$. Then 
\begin{equation*}
\sup_{y\in A}\int_{\mathcal{X}}F(x,y)\mu (dx)=\Lambda _{\infty }^{1}.
\end{equation*}
\end{theorem}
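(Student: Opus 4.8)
The plan is to recognize the statement as a Laplace-principle (Varadhan-type) asymptotic for the exponential integral $\Lambda_c^1=\frac{1}{c}\log\int_A e^{cG(y)}\gamma(dy)$, where I abbreviate $G(y):=\int_{\mathcal{X}}F(x,y)\mu(dx)$. Since adding a constant to $F$ shifts $G$, $\Lambda_c^1$, and both sides of the claimed identity by the same amount, I would first assume without loss of generality that $F\ge 0$, hence $G\ge 0$. The key preliminary step is to verify that $G$ is lower semicontinuous on $\mathcal{Y}$: if $y_n\to y$, then $\liminf_n F(x,y_n)\ge F(x,y)$ by lower semicontinuity of $F(x,\cdot)$, and Fatou's lemma (applicable since $F\ge 0$) upgrades this to $\liminf_n G(y_n)\ge G(y)$. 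Equivalently, every superlevel set $\{y:G(y)>L\}$ is open. I would also record that, $A$ being bounded with nonempty interior, it has finite positive Lebesgue measure, so that $\gamma$ (uniform on $A$) is well defined and assigns strictly positive mass to every nonempty relatively open subset of $A$.

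The proof then splits into matching upper and lower bounds. For the upper bound, set $S:=\sup_{y\in A}G(y)$; since $e^{cG(y)}\le e^{cS}$ pointwise on $A$ and $\gamma$ is a probability measure, $\int_A e^{cG(y)}\gamma(dy)\le e^{cS}$, whence $\Lambda_c^1\le S$ for every $c$, and therefore $\Lambda_\infty^1\le S$ (this remains correct, if vacuous, when $S=\infty$). For the lower bound I would fix any finite $L<S$ and produce a set of positive $\gamma$-measure on which $G$ exceeds $L$. By definition of $S$ there is some $y_0\in A$ with $G(y_0)>L$, and by lower semicontinuity the open set $\{G>L\}$ is an open neighborhood of $y_0$.

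Here is the crux, and the step I expect to be the main obstacle: $y_0$ may lie on the boundary of $A$, where $\gamma$ carries no mass, so I must push the superlevel set into the interior. This is exactly where the hypothesis that $A$ equals the closure of its interior is used. Since $y_0\in A=\overline{\mathrm{int}\,A}$ and $\{G>L\}$ is an open neighborhood of $y_0$, that neighborhood must meet $\mathrm{int}\,A$; hence $V:=\mathrm{int}\,A\cap\{G>L\}$ is a nonempty open subset of $A$, so $\gamma(V)=:p>0$. Then $\int_A e^{cG(y)}\gamma(dy)\ge\int_V e^{cG(y)}\gamma(dy)\ge p\,e^{cL}$, giving $\Lambda_c^1\ge L+\frac{1}{c}\log p$, and letting $c\to\infty$ yields $\Lambda_\infty^1\ge L$. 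As $L<S$ was arbitrary, $\Lambda_\infty^1\ge S$.

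Combining the two bounds gives $\Lambda_\infty^1=S=\sup_{y\in A}G(y)$, which is the assertion. I note that existence of the limit $\Lambda_\infty^1$ is already guaranteed by the monotonicity established in Proposition \ref{prop2}; but in fact the estimates above show directly that $\limsup_{c}\Lambda_c^1\le S\le\liminf_{c}\Lambda_c^1$, so the limit exists and equals $S$ in any case. The only places the full strength of the hypotheses is consumed are the lower-semicontinuity/Fatou argument (which needs $F$ bounded below and lower semicontinuous in $y$) and the boundary-to-interior passage (which needs $A=\overline{\mathrm{int}\,A}$ together with $\gamma$ having full support on $A$).
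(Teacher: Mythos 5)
Your proof is correct, but it takes a genuinely different route from the paper's. The paper argues entirely inside the relative-entropy duality framework: to show $\sup_{y\in A}\int_{\mathcal{X}}F(x,y)\mu (dx)\leq \Lambda _{\infty }^{1}$ it applies the duality bound (\ref{first}) with $\theta$ chosen uniform on a small ball $B_{\varepsilon }$ around an interior point $\bar{y}$, computes the resulting relative entropy penalty, lets $c\rightarrow \infty$ and then $\varepsilon \rightarrow 0$ using Fatou and lower semicontinuity, and then uses a second Fatou/lsc limiting argument to extend the bound from interior points to all of $A$; for the reverse inequality it invokes the existence and explicit form of the maximizing measure $\theta _{c}^{\ast }$ in the variational formula together with nonnegativity of relative entropy. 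You instead treat $\Lambda _{c}^{1}=\frac{1}{c}\log \int_{A}e^{cG(y)}\gamma (dy)$, with $G(y)=\int_{\mathcal{X}}F(x,y)\mu (dx)$ and $S=\sup_{y\in A}G(y)$, as a plain exponential integral and run the classical two-sided Laplace estimate: the direction the paper obtains from the optimizer $\theta _{c}^{\ast }$ you get from the trivial pointwise domination $e^{cG}\leq e^{cS}$, and the direction the paper obtains from small balls you get by showing the superlevel set $\{G>L\}$ is open (lsc of $G$ via Fatou) and must meet $\mathrm{int}\,A$ because $A=\overline{\mathrm{int}\,A}$, hence carries positive $\gamma$-mass $p$, giving $\Lambda _{c}^{1}\geq L+\frac{1}{c}\log p$. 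Your route is more elementary and self-contained: it never uses the duality or existence of optimizers, it consumes the hypothesis $A=\overline{\mathrm{int}\,A}$ in one clean step rather than through a second boundary-limit argument, it proves directly that the limit $\Lambda _{\infty }^{1}$ exists rather than citing convexity, and it handles $S=\infty$ without a separate case (the paper defers that case as a "straightforward extension"). What the paper's route buys is thematic and pedagogical coherence: its proof demonstrates exactly how the relative-entropy bounds at the heart of the paper are deployed in practice, by exhibiting explicit alternative measures $\theta$ concentrating near a point of interest and reading off the resulting performance bound, which is the mechanism a user of these results would imitate; your Laplace argument, while arguably cleaner for this particular theorem, does not exercise that machinery.
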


\begin{proof}
Since $c\Lambda _{c}^{1}$ is convex the limit of $\Lambda _{c}^{1}$ is well
defined, though it might take the value $\infty $. We give the proof for the
case $\Lambda _{\infty }^{1}<\infty $. The extension to the case $\Lambda
_{\infty }^{1}=\infty $ is straightforward.

We first prove that the left side of the last display is bounded above by
the right side. Fix any $\bar{y}\in A^{\circ }$, the interior of $A$. For
small $\varepsilon >0$ let $B_{\varepsilon }$ be the ball in $A$ about $\bar{%
y}$ of volume $\varepsilon $, and let $M$ be the volume of $A$. Let $\theta
(dy)$ be the measure whose density with respect to Lebesgue measure is $%
1/\varepsilon $ on $B_{\varepsilon }$, and zero elsewhere. Then 
\begin{eqnarray*}
R\left( \theta (dy)\left\Vert \gamma (dy)\right. \right)
&=&\int_{B_{\varepsilon }}\log \left[ (1/\varepsilon )/(1/M)\right] \theta
(dy) \\
&=&\left( 1/\varepsilon \right) \log \left[ (1/\varepsilon )/(1/M)\right] 
\text{.}
\end{eqnarray*}%
Then (\ref{first}) gives%
\begin{equation}
\int_{\mathcal{X}}\int_{B_{\varepsilon }}F(x,y)\theta (dy)\mu (dx)\leq \frac{%
1}{c}\left( 1/\varepsilon \right) \log \left[ (1/\varepsilon )/(1/M)\right]
+\Lambda _{c}^{1}.  \label{REbound}
\end{equation}%
Since $y\rightarrow F(x,y)$ is lower semicontinuous 
\begin{equation*}
\liminf_{\varepsilon \rightarrow 0}\int_{B_{\varepsilon }}F(x,y)\theta
(dy)\geq F(x,\bar{y}).
\end{equation*}%
Letting first $c\rightarrow \infty $ and then $\varepsilon \rightarrow 0$ in
(\ref{REbound}), Fatou's lemma gives $\int_{\mathcal{X}}F(x,\bar{y})\mu
(dx)\leq \Lambda _{\infty }^{1}$. Since $\bar{y}\in A^{\circ }$ is
arbitrary, the lower semicontinuity and another use of Fatou give the bound
for all $y\in A$.

For the reverse inequality, we use the fact that the minimizing measure in
the definition of $\Lambda _{c}^{1}$ exists. In fact, 
\begin{equation*}
\int_{\mathcal{Y}}\int_{\mathcal{X}}F(x,y)\mu (dx)\theta _{c}^{\ast }(dy)=%
\frac{1}{c}R\left( \theta _{c}^{\ast }(dy)\left\Vert \gamma (dy)\right.
\right) +\Lambda _{c}^{1}
\end{equation*}%
precisely at $\theta ^{\ast }$ defined by%
\begin{equation*}
\frac{d\theta _{c}^{\ast }(\cdot )}{d\gamma (\cdot )}(y)=e^{\int_{\mathcal{X}%
}cF(x,y)\mu (dx)}\left/ \int_{A}e^{\int_{\mathcal{X}}cF(x,y)\mu (dx)}\gamma
(dy)\right.
\end{equation*}%
(see \cite[Proposition 1.4.2]{dupell4}). By the non-negativity of relative
entropy%
\begin{eqnarray*}
\Lambda _{\infty }^{1} &=&\lim_{c\rightarrow \infty }\Lambda _{c}^{1} \\
&\leq &\limsup_{c\rightarrow \infty }\int_{\mathcal{Y}}\int_{\mathcal{X}%
}F(x,y)\mu (dx)\theta _{c}^{\ast }(dy) \\
&\leq &\sup_{y\in A}\int_{\mathcal{X}}F(x,y)\mu (dx).
\end{eqnarray*}%
\qquad
\end{proof}

\section{Examples and computational methods}

To indicate how the robust performance bounds might be used in practice, we
present some numerical examples, which illustrate the relationship between
the different risk sensitive integrals and the relevant techniques and
computational issues. We first review the polynomial chaos techniques that
are used to compute the risk-sensitive integrals. The reader familiar with
this material can skip to the next section.

\begin{remark}
\emph{Although previously the random variables with known and unknown
distributions were denoted by }$X$\emph{\ and }$Y$\emph{, in the rest of the
paper these random variables will be denoted by }$Z_{1}$\emph{\ and }$Z_{2}$%
\emph{. This is done to free up }$x$\emph{\ and }$y$\emph{\ to be spatial
variables in various PDE and related equations that might be used to define
the mapping }$F$\emph{.}
\end{remark}

\subsection{Review of polynomial chaos methods}

This subsection reviews the polynomial chaos method that we use to compute
the risk-sensitive integrals. The reader familiar with this material can
skip to Section 4.2.

To calculate risk sensitive integrals, one could use Monte Carlo
integration, which requires evaluating $F(Z_{1},Z_{2})$ for many replicas of 
$(Z_{1},Z_{2})$. This can get costly (though not necessarily for the given
examples), and so we utilize generalized polynomial chaos (gPC) methods to
approximate $F(z_{1},z_{2})$ and evaluate various statistical properties of $%
F(Z_{1},Z_{2})$ (see, for example, \cite{xiukar2}). We briefly recall the
mathematical framework of polynomial chaos methods, along with a simple
example in one dimension.

Let $(\Omega ,\mathcal{A}$\thinspace $,\mathcal{P)}$ be a probability space
where $\Omega $ is the sample space, $\mathcal{A}$ the associated $\sigma $%
-algebra, and $\mathcal{P}$ the probability measure on $\mathcal{A}$. Define
a random vector $\mathbf{Z}(\omega )\doteq (Z_{1}(\omega ),\ldots
,Z_{N}(\omega ))\in \mathbb{R}^{N}$ on this probability space. Consider a $d$%
-dimensional bounded domain $D\subset \mathbb{R}^{d}$ with boundary $%
\partial D$ and let $t\in \lbrack 0,T],T\in (0,\infty )$. We consider random
fields $u(t,x;\mathbf{Z(}\omega )):\bar{D}\times \lbrack 0,T]\times \Omega
\rightarrow \mathbb{R}$ that are defined by requiring that for a.e. $\omega
\in \Omega $%
\begin{equation*}
\mathcal{L}(t,x,u;\mathbf{Z(}\omega ))=f(t,x;\mathbf{Z(}\omega )),\qquad
(x,t,\omega )\in D\times \lbrack 0,T]\times \Omega ,
\end{equation*}%
subject to the boundary and initial conditions%
\begin{eqnarray*}
\mathcal{B}(t,x,u;\mathbf{Z(}\omega )) &=&g(t,x;\mathbf{Z(}\omega )),\qquad
(x,t,\omega )\in \partial D\times \lbrack 0,T]\times \Omega , \\
u(t,x;\mathbf{Z(}\omega )) &=&u_{0}(x;\mathbf{Z}(\omega )),\qquad
(x,t,\omega )\in D\times \{0\}\times \Omega .
\end{eqnarray*}%
Here $x=(x_{1},...,x_{d})\in \mathbb{R}^{d},\ \mathcal{L}$ is a linear or
nonlinear differential operator, and $\mathcal{B}$ is a boundary operator.
We assume for simplicity that a unique classical sense solution exists to
the differential equation and/or boundary conditions. Note that for the
first two examples of the last subsection the problem involves only time
dependence, and hence there is neither a spatial variable nor a boundary
condition.

We assume that $\left\{ Z_{i}\right\} _{i=1}^{N}$ are independent random
variables, with support $\left\{ \Gamma _{i}\right\} _{i=1}^{N}$ and with
probability density functions $\left\{ \rho _{i}(z_{i})\right\} _{i=1}^{N}$,
respectively. Hence the joint density is $\rho (\mathbf{z)=}\Pi
_{i=1}^{N}\rho _{i}(z_{i})$. Let $\Omega $ be the canonical space $\times
_{i=1}^{N}\Gamma _{i}$, in which case $\mathbf{Z(}\omega )=\omega $ and we
identify $\omega $ with $\mathbf{z}$. Thus the equations above become%
\begin{equation}
\mathcal{L}(t,x,u;\mathbf{z})=f(t,x;\mathbf{z}),\qquad (x,t,\mathbf{z})\in
D\times \lbrack 0,T]\times \Omega ,  \label{generalDE}
\end{equation}%
subject to the boundary and initial conditions%
\begin{eqnarray}
\mathcal{B}(t,x,u;\mathbf{z}) &=&g(t,x;\mathbf{z}),\qquad (x,t,\mathbf{z)}%
\in \partial D\times \lbrack 0,T]\times \Omega ,  \label{generalBC} \\
u(t,x;\mathbf{z}) &=&u_{0}(x;\mathbf{z}),\qquad (x,t,\mathbf{z)}\in D\times
\{0\}\times \Omega .  \notag
\end{eqnarray}%
We consider approximating the mapping as a function of the stochastic
variable, i.e., $\mathbf{z}\rightarrow u(t,x;\mathbf{z})$ at some particular 
$(t,x)$, via a finite sum of orthogonal basis functions.

We first define finite dimensional subspaces for $L^{2}(\Gamma _{i})$
according to 
\begin{equation*}
W_{i}^{P_{i}}=\left\{ v:\Gamma _{i}\rightarrow \mathbb{R}:v\in \text{span}%
\left\{ \phi _{i,m}(z_{i})\right\} _{m=0}^{P_{i}}\right\} ,\qquad i=1,...,N.
\end{equation*}%
Here $P_{i}$ represents the highest degree of the polynomial basis function,
and $\left\{ \phi _{i,m}(z_{i})\right\} $ are a set of orthonormal
polynomials with respect to the weight $\rho _{i}$, i.e., for $m\neq n$%
\begin{equation*}
\int_{\Gamma _{i}}\phi _{i,m}(z_{i})\phi _{i,n}(z_{i})\rho
_{i}(z_{i})dz_{i}=0
\end{equation*}%
and 
\begin{equation*}
\int_{\Gamma _{i}}\phi _{i,n}^{2}(z_{i})\rho _{i}(z_{i})dz_{i}=1.
\end{equation*}%
The orthonormal basis representation is determined by the probability
density function $\rho _{i}$. For example, if the density is uniform or
Gaussian, then Legendre or Hermite orthogonal polynomials, respectively, are
used. A table of polynomial basis functions and their respective
distributions is listed at the end of this section (also see \cite{xiu1}). A
finite dimensional subspace for $L^{2}(\Gamma )$, where $\Gamma \doteq
\Gamma _{1}\times \cdots \times \Gamma _{N}=\Omega $, can either be defined
as 
\begin{equation*}
W_{N}^{P}=\bigotimes\limits_{|\mathbf{P}|\leq P}W_{i}^{P_{i}}
\end{equation*}%
where the tensor product is over all combinations of the multi-index $%
\mathbf{P}=(P_{1},\dots ,P_{N})\in \mathbb{N}_{0}^{N}$ with $\left\vert 
\mathbf{P}\right\vert =\sum_{i=1}^{N}P_{i}\leq P$, or 
\begin{equation*}
\tilde{W}_{N}^{P}=\bigotimes\limits_{i=1}^{N}W_{i}^{P}.
\end{equation*}%
Thus, $W_{N}^{P}$ is the space of $N$-variate orthogonal polynomials of
total degree at most $P$, whereas $\tilde{W}_{N}^{P}$ is the full tensor
product of the one-dimensional polynomial spaces with each highest degree $P$%
. Note that $\dim (W_{N}^{P})=\binom{N+P}{P}$ and $\dim (\tilde{W}%
_{N}^{P})=(P+1)^{N}$, and that for large $N$, $\binom{N+P}{P}\ll (P+1)^{N}$.
Since our examples only consider $N=2$, we will use the full tensor product
space, $\tilde{W}_{N}^{P}$. Let $\Phi _{j}(\mathbf{z)},j=1,\ldots ,M$ denote
the elements of $\tilde{W}_{N}^{P}$, where $M=\dim (\tilde{W}_{N}^{P})$.

There are two standard methods for constructing gPC approximations, referred
to as the stochastic Galerkin method (see, e.g., \cite{xiu1}) and the
stochastic collocation method (see, e.g., \cite{xiuhes}). Since the
calculations presented below use only the collocation method, we restrict
discussion to this method. With stochastic collocation, we first consider an
approximation to the solution $u(t,x;\mathbf{z)}$ in terms of a Lagrange
interpolant of the form 
\begin{equation}
v(t,x;\mathbf{z)}=\sum_{k=1}^{N_{p}}v(t,x;\mathbf{z}_{k})\mathbf{L}_{k}(%
\mathbf{z}),  \label{lagrange_form}
\end{equation}%
where $\mathbf{L}_{k}(\mathbf{z})$ is a Lagrange polynomial of degree $\dim
(W_{N}^{P})$, satisfies $\mathbf{L}_{k}(\mathbf{z}_{l})=\delta _{kl}$, and $%
\left\{ \mathbf{z}_{k}\right\} _{k=1}^{N_{p}}$ are a set of prescribed nodes
in the $N$-dimensional space $\Gamma $. We require that the residuals 
\begin{eqnarray*}
R(t,x;\mathbf{z)} &\doteq &\mathcal{L}(t,x,v;\mathbf{z})-f(t,x;\mathbf{z)} \\
R^{BC}(t,x;\mathbf{z)} &\doteq &\mathcal{B}(t,x,v;\mathbf{z})-g(t,x;\mathbf{z%
}) \\
R^{IC}(x;\mathbf{z)} &\doteq &v(0,x;\mathbf{z})-u_{0}(x;\mathbf{z})
\end{eqnarray*}%
vanish at the collocation points $\left\{ \mathbf{z}_{k}\right\}
_{k=1}^{N_{p}}$, i.e., 
\begin{eqnarray*}
\mathcal{L}(t,x,v(t,x;\mathbf{z}_{k});\mathbf{z}_{k})-f(t,x;\mathbf{z}_{k}%
\mathbf{)} &=&0,\text{ for }k=1,\dots ,N_{p} \\
\mathcal{B}(t,x,v(t,x;\mathbf{z}_{k});\mathbf{z}_{k})-g(t,x;\mathbf{z}_{k})
&=&0,\ \text{for }k=1,\dots ,N_{p} \\
v(0,x;\mathbf{z}_{k})-u_{0}(x;\mathbf{z}_{k}) &=&0,\ \text{for }k=1,\dots
,N_{p}.
\end{eqnarray*}%
Thus, the stochastic collocation method produces a set of $N_{p}$ decoupled
equations, where each value of $v(t,x;\mathbf{z}_{k})$ coincides with the
exact solution $u(t,x;\mathbf{z})$ for the given $\mathbf{z}_{k}$, since
both satisfy the same equations.

Once the values of $\left\{ v(t,x;\mathbf{z}_{k})\right\} _{k=1}^{N_{p}}$
have been determined at the collocation points, one can construct a gPC
approximation in the orthogonal basis representation in the form%
\begin{equation}
v(t,x;\mathbf{z)}=\sum_{j=1}^{M}\hat{v}_{j}(t,x)\Phi _{j}(\mathbf{z}),
\label{orthog_rep}
\end{equation}%
which, under certain conditions, will be equivalent to the Lagrange basis
formulation. The coefficients $\left\{ \hat{v}_{j}\right\} _{j=1}^{M}$ can
be determined by inverting a Vandermonde matrix, where invertibility is
dependent on the choice of collocation points. If one chooses quadrature or
cubature\footnote{%
cubature points just refers to quadratures in more than one dimension.}
points for the collocation points, then invertibility is guaranteed.
Furthermore, if the cubature rule is exact up to polynomials of degree $2M$,
inversion of the Vandermonde matrix is not necessary\footnote{%
This is especially useful if the Vandermonde matrix is ill-conditioned.},
and the coefficients $\left\{ \hat{v}_{j}\right\} _{j=1}^{M}$ can be
computed by evaluating 
\begin{equation}
\hat{v}_{j}(t,x)=\sum_{k=1}^{N_{p}}v(t,x;\mathbf{z}_{k})\Phi _{j}(\mathbf{z}%
_{k})w_{k},  \label{coeff_orthog}
\end{equation}%
where $\left\{ w_{k}\right\} _{k=1}^{N_{p}}$ are the respective weights
according to the choice of quadrature or cubature points.

Statistical information is easy to obtain from the form (\ref{orthog_rep}).
For example,%
\begin{equation*}
\mathbb{E}_{\mathbf{z}}\mathbb{[}v(t,x;\mathbf{z})]=\int_{\Gamma }\left(
\sum_{j=1}^{M}\hat{v}_{j}(t,x)\Phi _{j}(\mathbf{z})\right) \rho (\mathbf{z})d%
\mathbf{z}=\hat{v}_{1}(t,x)
\end{equation*}%
and%
\begin{equation*}
\mathbb{E}_{\mathbf{z}}\mathbb{[(}v(t,x;\mathbf{z}))^{2}]=\int_{\Gamma
}\left( \sum_{j=1}^{M}\hat{v}_{j}(t,x)\Phi _{j}(\mathbf{z})\right) ^{2}\rho (%
\mathbf{z})d\mathbf{z}=\sum_{j=1}^{M}\hat{v}_{j}(t,x)^{2}
\end{equation*}%
as a consequence of the orthonormality. Other statistical information, such
as higher moments and sensitivity coefficients, are also easily calculated
(see \cite{xiu1}). The ease with which one can calculate such moments
explains why orthogonal representation (\ref{orthog_rep}) is preferred over
the Lagrange form (\ref{lagrange_form}).

To illustrate the approach, we look at a simplified version of Example \ref%
{example1}. Let $g(z_{2})=u_{0}$, so that the only uncertainty is in the
decay rate $k$, which depends on the random variable $Z$. The simplified
problem becomes

\begin{equation*}
\frac{d}{dt}u(t)=-k(z)u(t),\qquad u(0)=u_{0},
\end{equation*}%
where $u(t;z)$ denotes the solution parameterized by $z$, with $z$ a point
in the range of $Z$. Requiring that the residuals vanish at the collocation
points $\left\{ z_{n}\right\} _{n=1}^{N_{p}}$ gives%
\begin{equation*}
R(t;z_{n})=\frac{d}{dt}v(t;z_{n})+k(z_{n})v(t;z_{n})=0
\end{equation*}%
and%
\begin{equation*}
R^{IC}(z_{n})=v(0;z_{n})-u_{0}=0.
\end{equation*}%
Note that there are indeed $N_{p}$ decoupled equations, which are solved
independently for each $n$. Here the solutions are exact:%
\begin{equation*}
v(t;z_{n})=u_{0}e^{-k(z_{n})t},\ for=1,\dots ,N_{p}.
\end{equation*}%
One can then obtain the approximation (\ref{orthog_rep}) with respect to the
orthogonal basis, assuming that the appropriate collocation points and
weights have been chosen.

We illustrate this example with $k(z)=z$ and $Z\sim \mathcal{N}(\mu
=0,\sigma ^{2}=1)$. Here the $\left\{ \Phi _{j}\right\} _{j=1}^{M}$ are the
Hermite polynomials, which are orthonormal with respect to the weighting
function $\rho (z)=e^{-(z-\mu )^{2}/2\sigma ^{2}}/\sigma \sqrt{2\pi }$ with
support $\Gamma =(-\infty ,+\infty )$. Figure \ref{fig2} shows the
stochastic collocation approximation $v(1,z)$ at $t=1$ as a function of $z$,
with $N_{p}=2,3,4,5$, versus the exact solution $u_{exact}(1,z)=e^{-z}$.
Note that the collocation approximation is an interpolation of the exact
solution at $N_{p}$ points.


\begin{figure}[h!]
\begin{center}
\includegraphics[width=5in,height=3.45in]{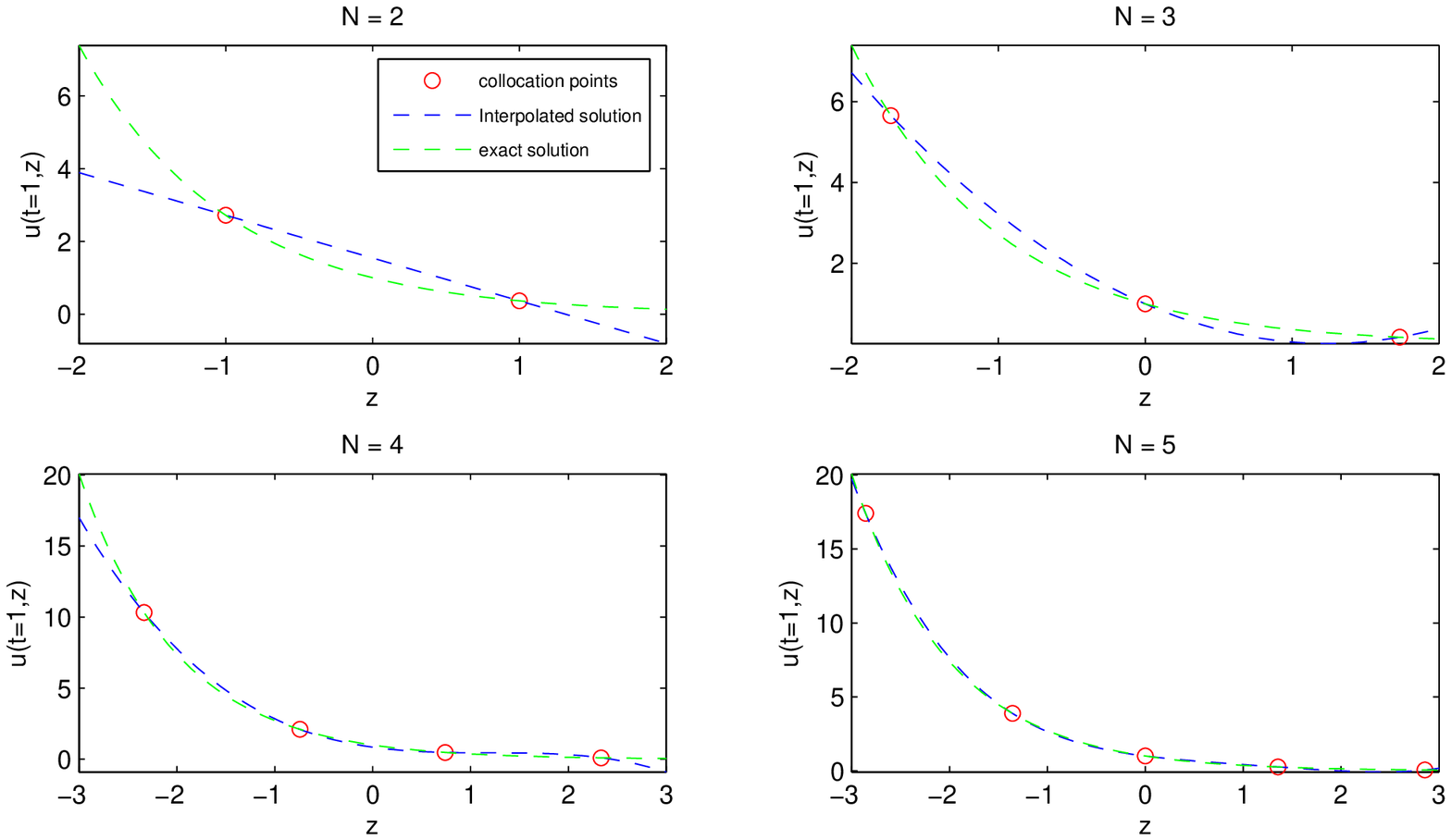}
\end{center}
\caption{Interpolated solutions versus the exact solution at $t=1$.}
\label{fig2}
\end{figure}


\begin{figure}[h!]
\begin{center}
\includegraphics[width=3.79in,height=2.85in]{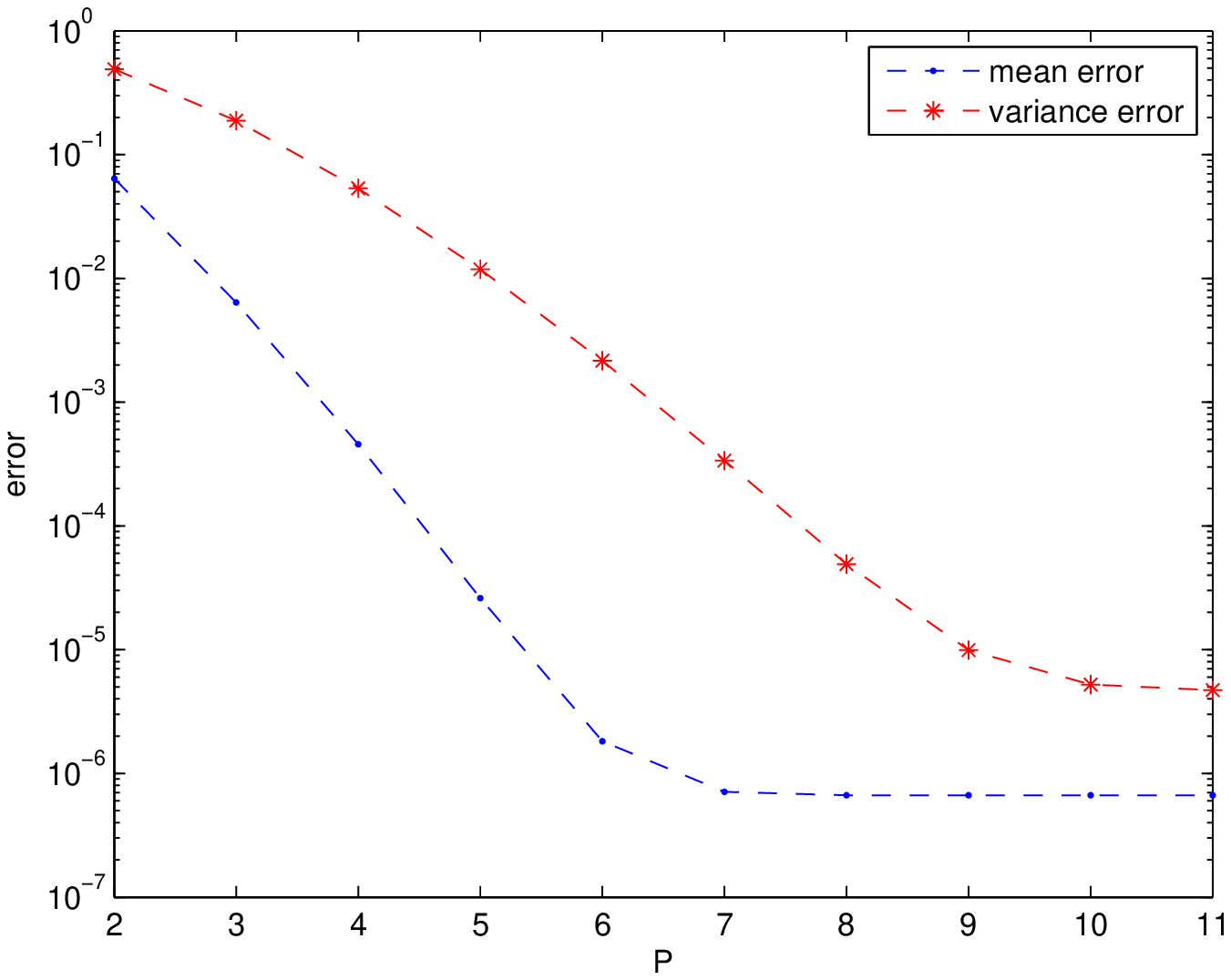}
\end{center}
\caption{Relative errors.}
\label{fig3}
\end{figure}

Figure \ref{fig3} shows the relative error between the exact and
approximated mean and variance in a semilog plot. The linear decrease in
relative error on the semilog plot implies an exponential decay in the
error. Errors become a constant at the limits of the accuracy of the
numerical scheme.

\subsection{Numerical examples and interpretation}

\bigskip

\setcounter{figure}{3} We now discuss numerical calculation of the
risk-sensitive integrals. For convenience, we recall the three types of
risk-sensitive integrals introduced in Section \ref{Section:RSforms}, the
ordinary risk sensitive cost $\Lambda _{c}$ and the two hybrid integrals $%
\Lambda _{c}^{1}$ and $\Lambda _{c}^{2}$:%
\begin{eqnarray}
\Lambda _{c} &=&\frac{1}{c}\log \int_{\mathcal{Z}_{1}}\int_{\mathcal{Z}%
_{2}}e^{cF(z_{1},z_{2})}\gamma (dz_{2})\mu (dz_{1}),\   \label{rs_integrals}
\\
\ \Lambda _{c}^{1} &=&\frac{1}{c}\log \int_{\mathcal{Z}_{2}}e^{\int_{%
\mathcal{Z}_{1}}cF(z_{1},z_{2})\mu (dz_{1})}\gamma (dz_{2}),  \notag \\
\ \ \Lambda _{c}^{2} &=&\frac{1}{c}\int_{\mathcal{Z}_{1}}\left[ \log \int_{%
\mathcal{Z}_{2}}e^{cF(z_{1},z_{2})}\gamma (dz_{2})\right] \mu (dz_{1}), 
\notag
\end{eqnarray}%
where $\mathcal{Z}_{1},\mathcal{Z}_{2}$ are the range spaces for the random
variables $Z_{1},Z_{2}$, respectively. Recall the relationships $\Lambda
_{c}^{1}\leq \Lambda _{c}^{2}\leq \Lambda _{c}$. It is assumed that the
distribution of $Z_{1}$ is known, but this is not true for $Z_{2}$.

In the stochastic collocation approach, we replace $F(z_{1},z_{2})$ in (\ref%
{rs_integrals}) with a polynomial interpolant, $\tilde{F}(z_{1},z_{2})$,
determined by the full tensor product of the one dimensional collocation
points, $\left\{ z_{1,i}\right\} _{i=1}^{N_{1,q}}\otimes \left\{
z_{2,i}\right\} _{i=1}^{N_{2,q}}$ in $\mathcal{Z}_{1}\times \mathcal{Z}_{2}$%
. Thus 
\begin{equation*}
\tilde{F}(z_{1},z_{2}\mathbf{)}=\sum_{j=1}^{M}\hat{F}_{j}(t,x)\Phi
_{j}(z_{1},z_{2}),
\end{equation*}%
where $M=N_{1,q}\cdot N_{2,q}\,$, $\Phi _{j}$ is the full tensor product
basis, and $\hat{F}_{j}(t,x)$ is computed using (\ref{coeff_orthog}). From
here, one can efficiently compute the risk sensitive integrals (\ref%
{rs_integrals}) via either Monte Carlo sampling or quadrature. For both
methods, once the samples or quadrature points have been chosen, one can
calculate the risk sensitive integrals for different values of $c$ without
resampling or choosing different quadrature points. Hence, it is
computationally inexpensive to compute the risk sensitive integral for
different values of $c$. Note that for higher dimensions, a sparse grid is
preferred and, in most cases, necessary for these types of computations. It
is typical to use the Smolyak algorithm to generate these sparse grids,
which are based on a one dimensional quadrature rule (for example, see
Section 4.1.2 in \cite{xiu2} for a more detailed explanation and further
references).

In this paper, we opt for the numerical quadrature approach. Since $\tilde{F}%
(z_{1},z_{2})$ can be evaluated very quickly, we can compute the risk
sensitive integrals with high accuracy and efficiency using a high order
quadrature rule. The number of quadrature points chosen to compute the risk
sensitive integrals need not be the same as the number of collocation points
used in computing the coefficients of the gPC expansion, though the type of
quadrature points is the same. For example, if we were to use Legendre-Gauss
quadrature points to evaluate the gPC coefficients, we would again use
Legendre-Gauss quadrature points to calculate the risk sensitive integrals.
In the examples presented below a much larger number of quadrature points
are used to compute the risk sensitive integrals. To avoid confusion, to the
set of quadrature points used to evaluate the risk sensitive integrals is
denoted by $\left\{ \tilde{z}_{1,i}\right\} _{i=1}^{\tilde{N}_{1,q}}\otimes
\left\{ \tilde{z}_{2,i}\right\} _{i=1}^{\tilde{N}_{2,q}}$, i.e., the full
tensor product of one dimensional quadrature points. In the examples, $%
\tilde{N}_{1,q}$ and$\ \tilde{N}_{2,q}$ equal $2^{8}$, while $N_{1,q}$ and$\
N_{2,q}$ are $8$ for smooth $F$ and $12$ when $F$ is an indicator functions.

We also implemented the Monte Carlo approach, but observed that numerical
quadrature gave a much better approximation for the same computational
effort, at least for our examples. It should also be noted that a variation
on traditional Monte Carlo is needed for the hybrid forms of the risk
sensitive integrals. For the original risk-sensitive cost function (\ref%
{rs_cost}), the standard approach is to use many independent samples of the
pair $(Z_{1},Z_{2})$. This produces an estimate which has a mean square
error of $\mathcal{O(}N^{-1/2})$, where $N$ is the total number of Monte
Carlo samples. However, for the first hybrid form (\ref{hybrid1}), one must
approximate the inner integral $\int_{\mathcal{X}}cF(z_{1},z_{2})\mu
(dz_{1}) $ for every fixed sample of $Z_{2}$. Thus one has to simulate more $%
Z_{1}$ samples than $Z_{2}$ samples. Similarly, to compute the second form
of the hybrid (\ref{hybrid2}), one must approximate the inner integral $%
\int_{\mathcal{Y}}e^{cF(z_{1},z_{2})}\gamma (dz_{2})$ for every fixed $Z_{1}$%
. In particular, for both hybrid forms one does not simply choose
independent samples of $(Z_{1},Z_{2})$.

\subsubsection{Independent aleatoric and epistemic uncertainties}

When the aleatoric and nominal epistemic variables are independent, the
numerical quadrature approach would evaluate the risk sensitive integrals
via the formulas

\begin{eqnarray*}
\Lambda _{c} &\approx &\frac{1}{c}\log \left( \sum_{i=1}^{\tilde{N}%
_{1,q}}\sum_{j=1}^{\tilde{N}_{2,q}}e^{cF(\tilde{z}_{1,i},\ \tilde{z}_{2,j})}%
\tilde{w}_{1,i}\tilde{w}_{2,j}\right) \  \\
\ \Lambda _{c}^{1} &\approx &\frac{1}{c}\log \left( \sum_{j=1}^{\tilde{N}%
_{2,q}}e^{\left( \sum_{i=1}^{\tilde{N}_{1,q}}cF(\tilde{z}_{1,i},\ \tilde{z}%
_{2,j})\tilde{w}_{1,i}\right) }\tilde{w}_{2,j}\right) \\
\ \ \Lambda _{c}^{2} &\approx &\frac{1}{c}\sum_{i=1}^{\tilde{N}_{1,q}}\left(
\log \sum_{j=1}^{\tilde{N}_{2,q}}e^{cF(\tilde{z}_{1,i},\tilde{z}_{2,j})}%
\tilde{w}_{2,j}\right) \tilde{w}_{1,i}.
\end{eqnarray*}%
where $\left\{ \tilde{z}_{1,i},\tilde{w}_{1,i}\right\} _{i=1}^{\tilde{N}%
_{1,q}},\left\{ \tilde{z}_{2,i},\tilde{w}_{2,i}\right\} _{i=1}^{\tilde{N}%
_{2,q}}$ are the pairs of quadrature points and weights chosen based on the
underlying distribution.

The first example is taken from \cite{xiueld} and involves a simple ODE. The
example is convenient because analytic solutions to the risk-sensitive
integrals can be obtained, and then compared with the numerical
approximations described in the next section. The second example is a random
oscillator, where the stochastic parameters are the spring and dampening
coefficients. The third example involves a heat equation with random heat
capacity and thermal conductivity.

\begin{example}
\label{example1}\emph{We consider }$F(Z_{1},Z_{2})$\emph{\ defined in terms
of the solution of the ODE}%
\begin{equation*}
\frac{d}{dt}u(t)=-k(z_{1})u(t),\quad u(0)=g(z_{2}),
\end{equation*}%
\emph{where }$k$\emph{\ and }$g$\emph{\ are known functions. Letting }$%
u(t;z_{1},z_{2})$\emph{\ denote the solution parameterized by }$%
(z_{1},z_{2}) $\emph{, set }$F(Z_{1},Z_{2})=h(u(t;Z_{1},Z_{2}))$\emph{,
where typical choices of }$h$ \emph{are }$h(u)=u,h(u)=u^{2},h(u)=\boldsymbol{%
1}\{a\leq u\leq b\}$\emph{.}
\end{example}

Here we take $k(z_{1})=z_{1},g(z_{2})=z_{2},$ let $Z_{1}\sim U[0,1]$, $%
Z_{2}\sim U[0,1]$, and compute risk sensitive integrals for $%
F(z_{1},z_{2})=[u(1;z_{1},z_{2})]^{2}$, and $F(z_{1},z_{2})=\boldsymbol{1}%
\left\{ .8\leq u(1;z_{1},z_{2})\leq 1\right\} $. Hence the goal is to obtain
robust bounds on the second moment and the probability that the solution to
the stochastic ODE falls within the interval $[.8,1]$ at time $t=1$,
respectively. Note that the indicator function is not smooth, and thus more
collocation points are required to obtain an accurate approximation. We
choose a Legendre polynomial basis for both $Z_{1}$ and $Z_{2}$ and use the
Legendre-Gauss quadrature points and weights for each dimension.

Figures \ref{ex1_1a} and \ref{ex1_2a} show the approximations as a function
of $c$ for $F(u)=u^{2}$ and $F(u)=\boldsymbol{1}\{.8\leq u\leq 1\},$
respectively. As expected $\Lambda _{c}^{1}$ gives the best bounds, while
the original form $\Lambda _{c}$ gives the worst upper bounds of the three.


\begin{figure}[h!]
\begin{center}
\includegraphics[width=4.42in,height=2.42in]{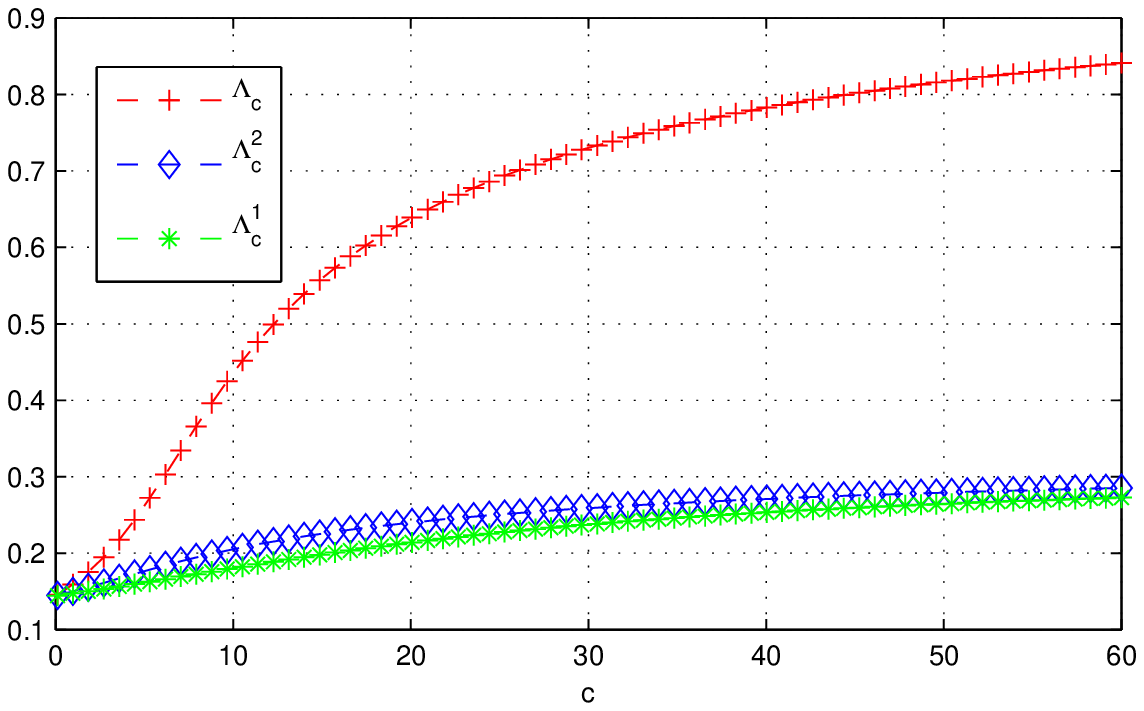}
\end{center}
\caption{ 
Example \protect\ref{example1}, $
\left\{ \Lambda _{c}^{i}\right\} _{i=1}^{2}$ for $F(u)=u^{2}$.
}
\label{ex1_1a}
\end{figure}


\begin{figure}[h!]
\begin{center}
\includegraphics[width=4.47in,height=2.45in]{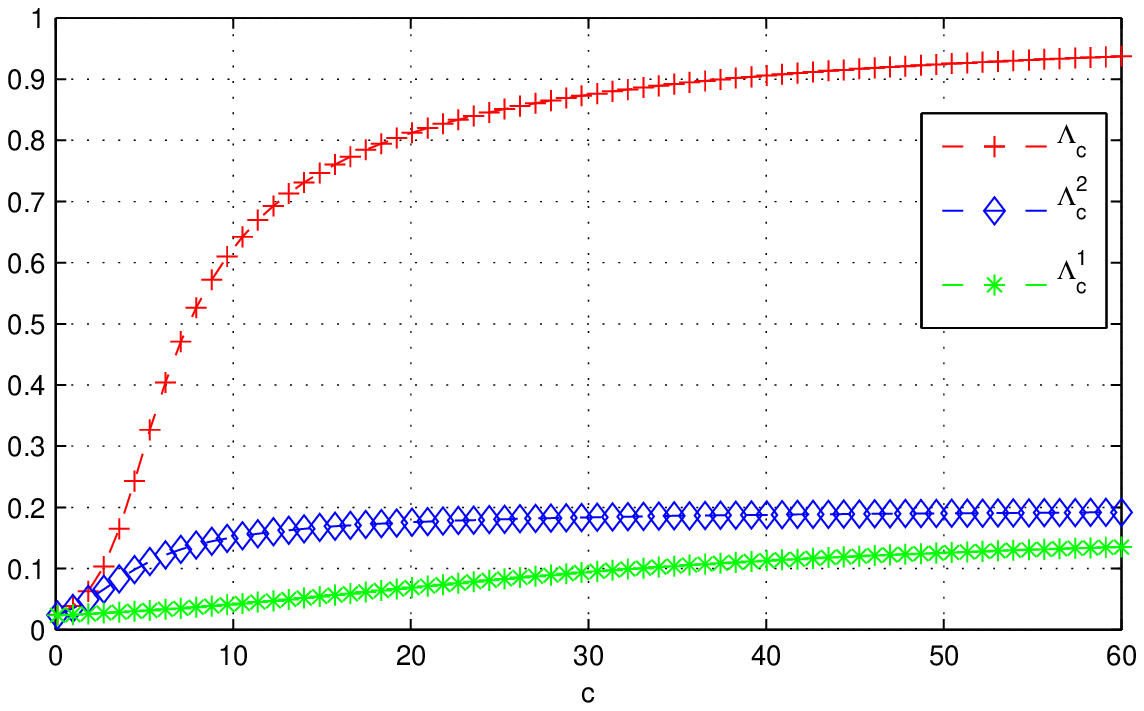}
\end{center}
\caption{
Example \protect\ref{example1}, 
$\Lambda _{c},\left\{ \Lambda _{c}^{i}\right\} _{i=1}^{2}$ for $F(u)=%
\boldsymbol{1}\{.8\leq u\leq 1\}$.
}
\label{ex1_2a}
\end{figure}

The plots also suggest what happens as $c\rightarrow \infty $. The measure $%
\Lambda _{c}$ is expected to yield robust bounds if one is uncertain
regarding the distributions of both random variables $Z_{1}$ and $Z_{2}$,
and $\lim_{c\rightarrow \infty }\Lambda _{c}$ represents the tightest upper
bound on performance if we know nothing about these random variables except
for their support. In this case, the limit in Figure \ref{ex1_2a} appears to
be 1, which means that for some choice of $z_{1}$ and $z_{2}$, $%
u(1;z_{1},z_{2})\in \lbrack .8,1]$. Hence without more information on the
distributions of $Z_{1},Z_{2}$ we do not obtain information other than this
from this performance bound. On the other hand, $\lim_{c\rightarrow \infty
}\Lambda _{c}^{i}$ for $i=1,2$ is strictly less than 1 and thus by Theorem %
\ref{thm:c_to_infty} gives a meaningful performance bound when the only
information available about $Z_{2}$ is its support. Note also that while $%
\Lambda _{c}^{1}$ and $\Lambda _{c}^{2}$ seem close for $F(u)=u^{2}$, the
differ considerably for $F(u)=\boldsymbol{1}\{.8\leq u\leq 1\}$.

In Figures \ref{ex1_1b} and \ref{ex1_2b} we display the relative error $%
|\Lambda _{c}^{i}-\Lambda _{c,exact}^{i}|/\left\vert \Lambda
_{c,exact}^{i}\right\vert $ between the three risk sensitive integrals and
the exact solution, which we were able to calculate using partially
analytical solutions. The relative error is fairly small for $F(u)=u^{2}$,
but significantly larger for $F(u)=\boldsymbol{1}\{.8\leq u\leq 1\}$, even
though more collocation points were used to determine the gPC coefficients.


\begin{figure}[h!]
\begin{center}
\includegraphics[width=4.32in,height=2.37in]{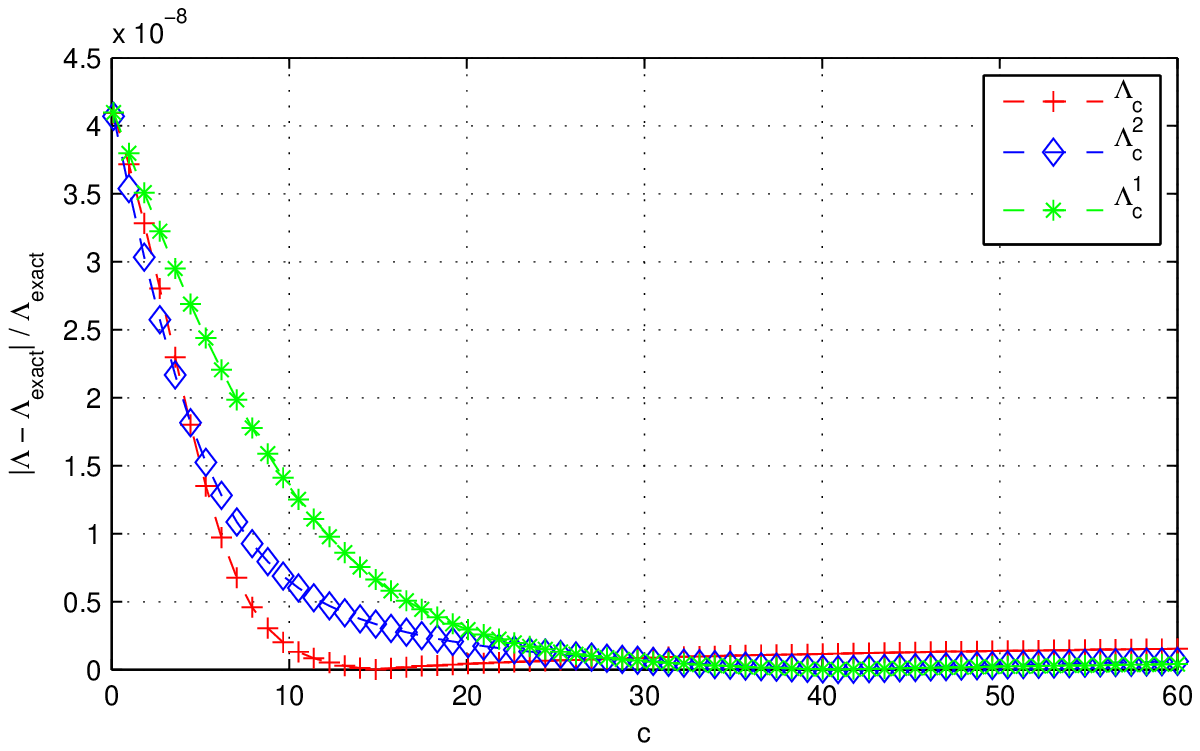}
\end{center}
\caption{
Example \protect\ref{example1},
relative error for $\Lambda _{c},\ \left\{ \Lambda _{c}^{i}\right\}
_{i=1}^{2}$ with $F(u)=u^{2}$.
}
\label{ex1_1b}
\end{figure}


\begin{figure}[h!]
\begin{center}
\includegraphics[width=4.35in,height=2.38in]{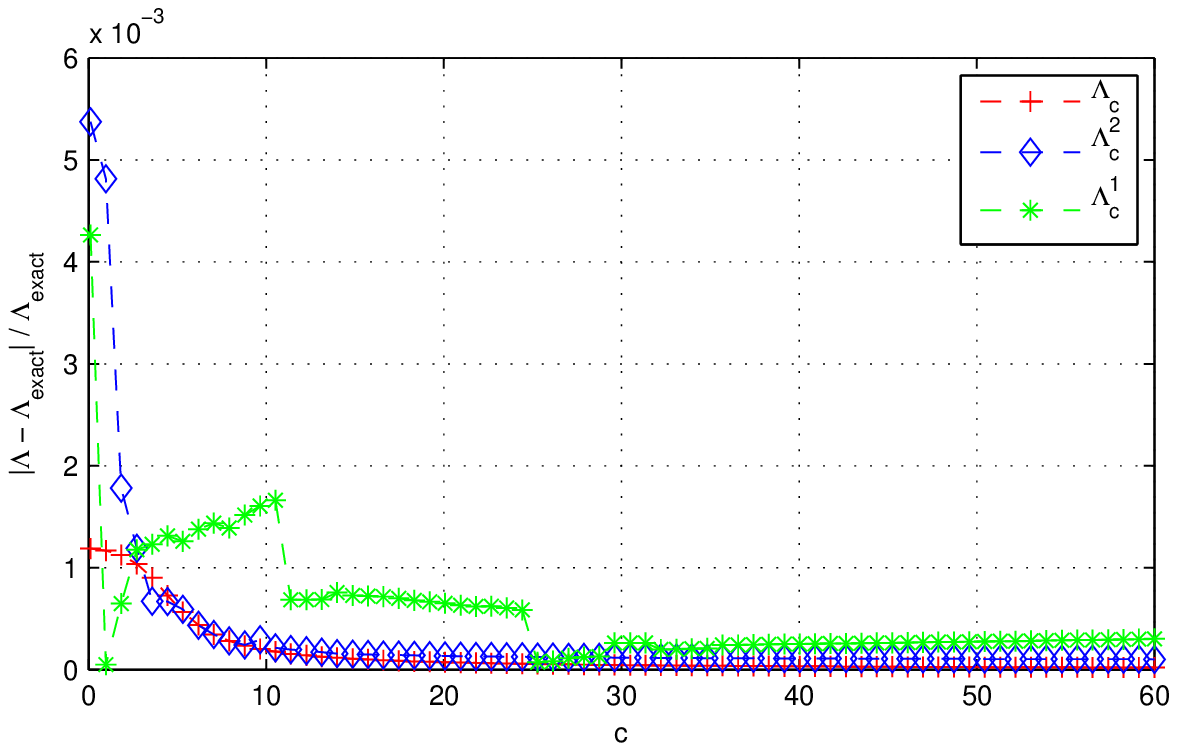}
\end{center}
\caption{
Example \protect\ref{example1},
relative error for $\Lambda _{c},\ \left\{ \Lambda _{c}^{i}\right\}
_{i=1}^{2}$ with $F(u)=\boldsymbol{1}\{.8\leq u\leq 1\}$.
}
\label{ex1_2b}
\end{figure}

Next we consider the problem of computing the value of $c$ which minimizes $%
c\rightarrow \frac{1}{c}B+\ \Lambda _{c}^{i}$ for $i=0,1,2$.\footnote{%
Note that the risk sensitive integrals are infinite for $c=0$. Thus in the
numerical calculations, we actually compute $c$ starting from $.01$.} Here $%
B $ is a maximum relative entropy distance that will be allowed between the
\textquotedblleft true\textquotedblright\ distribution, $\theta (dz_{2})$,
and the nominal distribution, $\gamma (dz_{2})$, and the minimization
produces the tightest possible bounds. For the example, suppose that the
family of distributions for which bounds are to be valid includes $\theta
(dz_{2})\sim $ beta$(\alpha =1.5,\beta =1.5)$, which implies $B\approx .0484$%
. In this example $B\leq f(c)\doteq c\frac{d}{dc}H^{i}(c)-H(c)$ for some $c$%
, where $H^{i}(c)\doteq c\Lambda _{c}^{i}$. As shown in Proposition \ref%
{prop2}, this implies there is a unique local minimum for $c\in (0,\infty )$%
. See Figures \ref{ex1_1c} and \ref{ex1_2c}. Alternatively, if $B>f(c)$ for
all $c$, then Proposition \ref{prop2} implies the minimum is achieved in the
limit as $c\rightarrow \infty $. From Figure \ref{ex1_2c} we find a minimum
for $\frac{1}{c}B+\ \Lambda _{c}^{1}$ of approximately 0.04 at $c\approx
5.12 $. Thus for all distributions on $Z_{2}$ whose relative entropy
distance to $U[0,1]$ is less than 0.0484, the probability that the solution
to the random ODE falls between .8 and 1 at time 1 is less than 0.04. 


\begin{figure}[h!]
\begin{center}
\includegraphics[width=4.47in,height=2.45in]{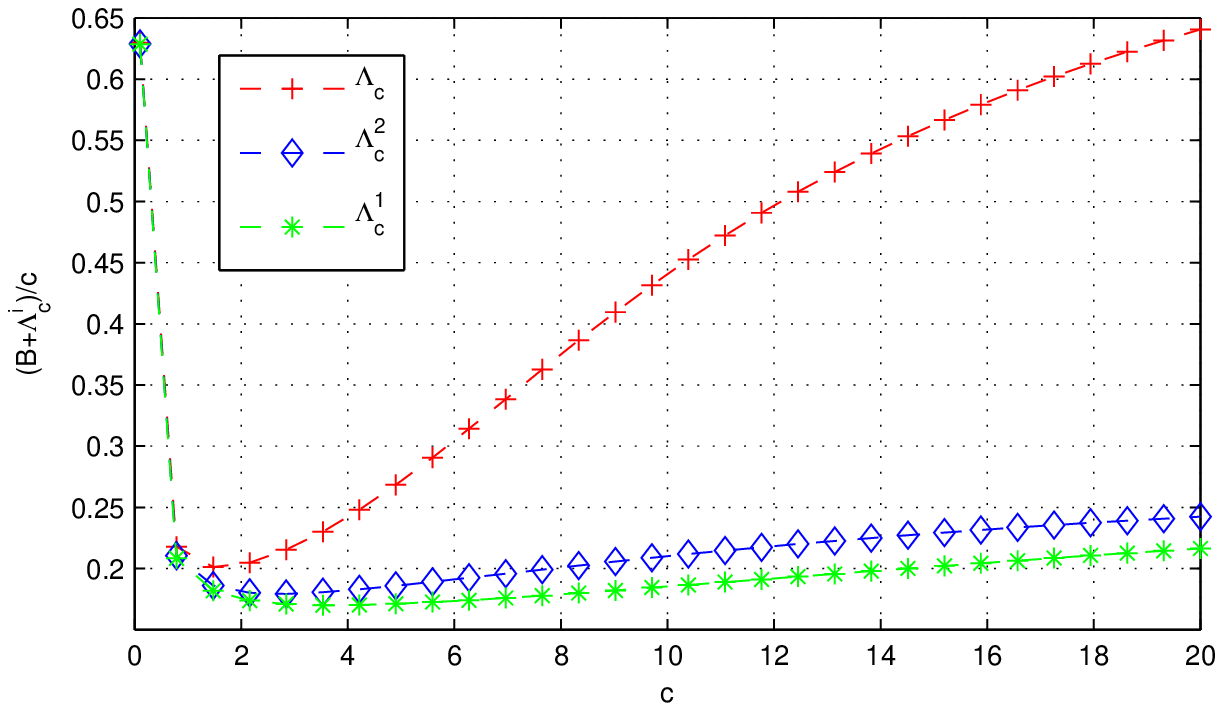}
\end{center}
\caption{
Example \protect\ref{example1}, $\frac{%
1}{c}(B\ +\ \Lambda _{c}),\left\{ \frac{1}{c}(B\ +\ \Lambda
_{c}^{i})\right\} _{i=1}^{2}$ with $F(u)=u^{2}$.
}
\label{ex1_1c}
\end{figure}


\begin{figure}[h!]
\begin{center}
\includegraphics[width=4.47in,height=2.45in]{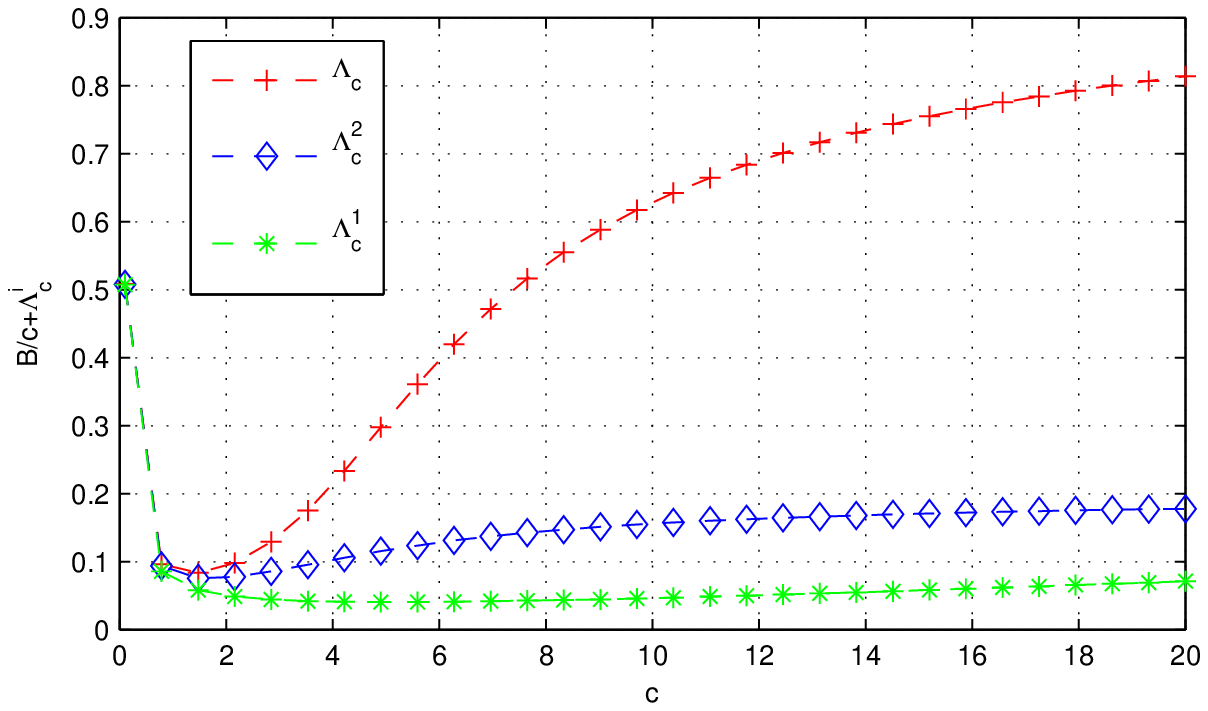}
\end{center}
\caption{
Example \protect\ref{example1}, $\frac{1}{c}(B\ +\
\Lambda _{c}),\left\{ \frac{1}{c}(B\ +\ \Lambda _{c}^{i})\right\} _{i=1}^{2}$
with $F(u)=\boldsymbol{1}\{.8\leq u\leq 1\}$.
}
\label{ex1_2c}
\end{figure}

Note that the minimum is at different values of $c$ for the three different
integrals, but the minimum is always the smallest for the first form of the
hybrid. In the case when the minimum occurs at some $c<\infty $, the minimum
is easily calculated, since this is a one-dimensional unconstrained
minimization problem (for example, one can use a golden method search
algorithm, such as MATLAB's fminbnd function). In the case when the minimum
occurs in the limit as $c\rightarrow \infty $, in order to find the minimum,
one can iterate $\Lambda _{c}^{i}$ until the successive iterations differ by
less than a prescribed tolerance.

The second example is a random oscillator, where the stochastic parameters
are the spring and dampening coefficients.

\begin{example}
\label{example2}\emph{We consider }$F(Z_{1},Z_{2})$\emph{\ defined in terms
of the solution of the ODE}%
\begin{equation*}
\frac{d^{2}}{dt^{2}}u(t)+\gamma (z_{2})\frac{d}{dt}u(t)+k(z_{1})u(t)=f(t),%
\quad u(0)=u_{0},\quad u^{\prime }(0)=u_{0}^{\prime },
\end{equation*}%
\emph{where }$k$\emph{\ and }$\gamma $\emph{\ are known functions, which
represent the spring and dampening coefficients, respectively. The outcome
of interest is whether or not the position of the random oscillator falls
within a specified range, }$[a,b]$\emph{, at a specified time, }$%
t_{critical} $\emph{. Letting }$u(t;z_{1},z_{2})$\emph{\ denote the solution
parameterized by }$(z_{1},z_{2})$\emph{, set }$F(Z_{1},Z_{2})=\mathbf{1}%
\{a\leq u(t_{critical};Z_{1},Z_{2})\leq b\}$\emph{.}
\end{example}

Here we consider the random oscillator with $k(z_{1})=4z_{1},\gamma
(z_{2})=z_{2}/5,\ $and $f(t)=10\cos (10t)+3$, and choose $Z_{1}\sim $ beta$%
(\alpha =5,\beta =5),\ Z_{2}\sim $ beta$(\alpha =5,\beta =5)$, $%
t_{critical}=4$, and $[a,b]=(-\infty ,2]$. Here we use the Gauss-Jacobi
quadrature points and weights. In this example we are concerned with the
position of the oscillator at a critical time. Figure \ref{ex2_1a} plots the
risk sensitive integrals for Example \ref{example2} with $F(u)=\boldsymbol{1}%
\{u\leq 2\}$. Figure \ref{ex2_1c} depicts $c\rightarrow \frac{1}{c}%
(B+\Lambda _{c}^{i})$, where $B=R\left( \theta (dz_{2})\left\Vert \gamma
(dz_{2})\right. \right) $ $\approx .0587$, with $\theta (dz_{2})\sim $ beta$%
(\alpha =10,\beta =10)$.


\begin{figure}[h!]
\begin{center}
\includegraphics[width=4.45in,height=2.43in]{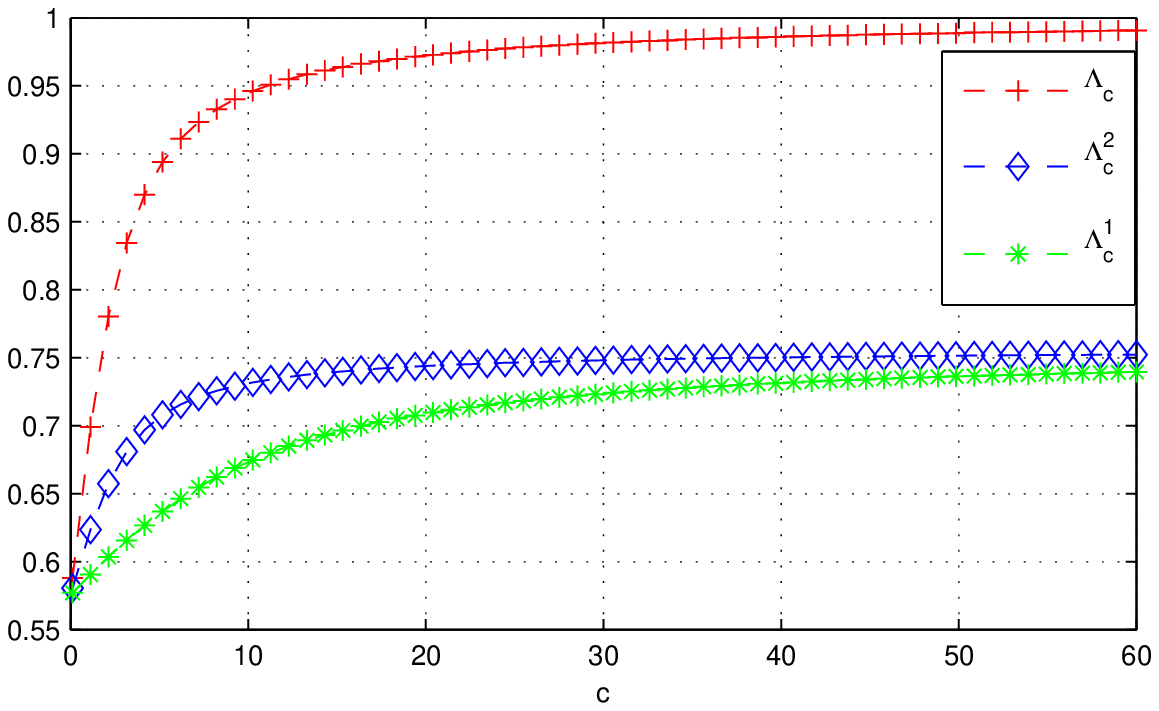}
\end{center}
\caption{
Example \protect\ref{example2}, $%
\Lambda _{c},\left\{ \Lambda _{c}^{i}\right\} _{i=1}^{2}$ for $F(u)=%
\boldsymbol{1}\{u\leq 2\}$.
}
\label{ex2_1a}
\end{figure}


\begin{figure}[h!]
\begin{center}
\includegraphics[width=4.47in,height=2.45in]{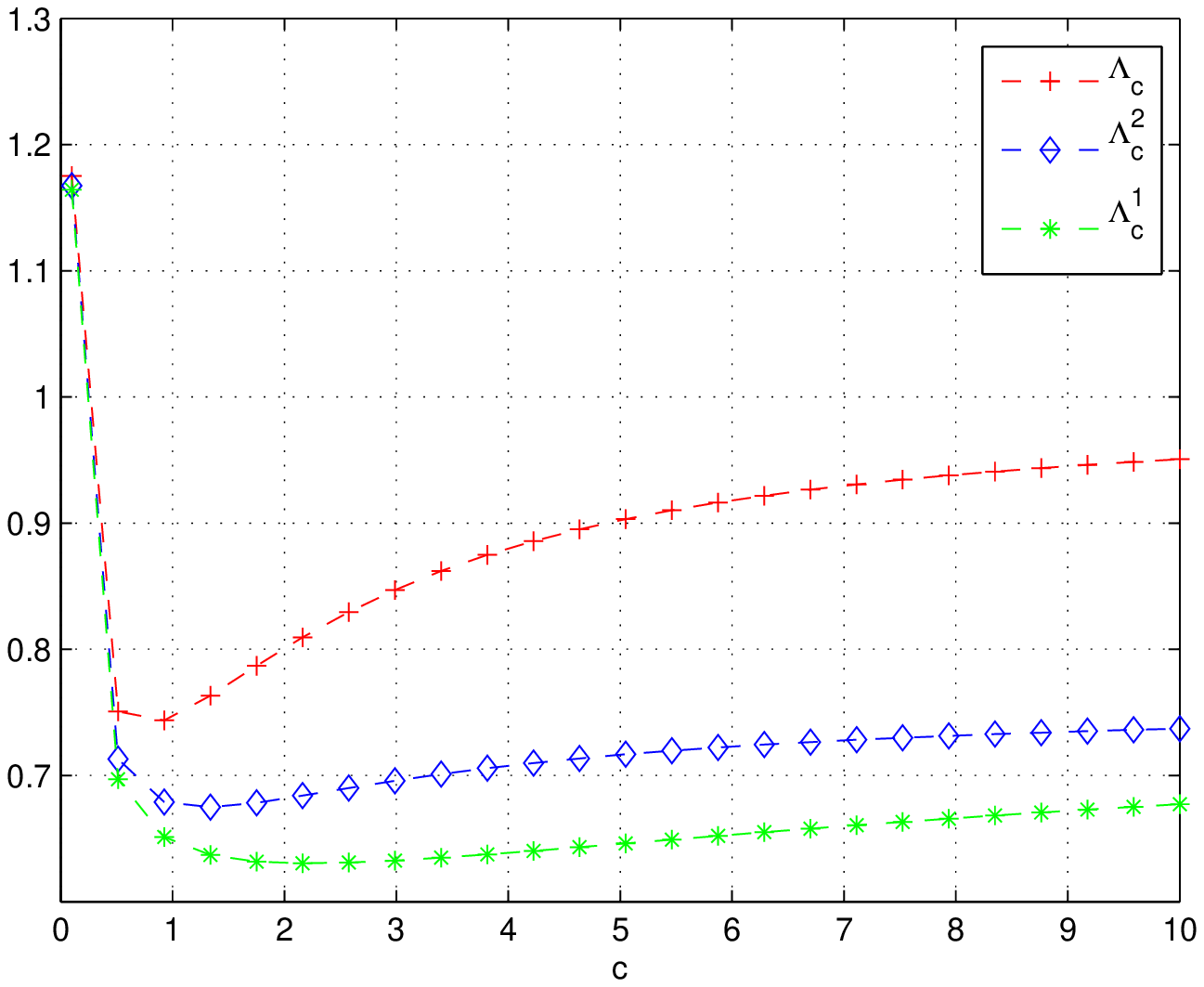}
\end{center}
\caption{
Example \protect\ref{example2}, $%
\frac{1}{c}(B\ +\ \Lambda _{c}),\left\{ \frac{1}{c}(B\ +\ \Lambda
_{c}^{i})\right\} _{i=1}^{2}$ with $F(u)=\boldsymbol{1}\{u\leq 2\}$.
}
\label{ex2_1c}
\end{figure}

Finally, the third example involves a heat equation with random heat
capacity and thermal conductivity.

\begin{example}
\label{example3}\emph{We consider }$F(Z_{1},Z_{2})$\emph{\ defined in terms
of the solution of the PDE}%
\begin{equation*}
\frac{d}{dt}u(t,x)=\frac{k(u;z_{1})}{m(z_{2})}\frac{d^{2}}{dx^{2}}u(t,x),\ \
\ \ \ u(0,x)=u_{0}(x)
\end{equation*}%
\emph{on }$x\in (0,L)$\emph{\ with boundary conditions}%
\begin{equation*}
-k(u;z_{1})\frac{d}{dx}u(t,0)=q,\ \ \ \frac{d}{dx}u(t,L)=0.\ 
\end{equation*}%
\emph{Here }$k$\emph{\ and }$m$\emph{\ are the thermal conductivity and heat
capacity, respectively. Note that in this example, the randomness affects
the diffusivity and the boundary conditions. We are interested in the
probability that the material exceeds a particular temperature, }$%
T_{critical}$\emph{, at the time }$t_{final}$\emph{\ and at some point }$%
x^{\star }\in \lbrack 0,L]$\emph{. Letting }$u(t;z_{1},z_{2})$\emph{\ denote
the solution parameterized by }$(z_{1},z_{2})$\emph{, set }$F(Z_{1},Z_{2})=%
\boldsymbol{1}\{u(t_{final},x^{\star };Z_{1},Z_{2})\geq T_{critical}\}$\emph{%
.}
\end{example}

Here we consider the random heat equation with (nonlinear) Neumann boundary
conditions. We use $k(u,z_{1})=z_{1}+(1.5\times 10^{-7})u,$ $%
m(z_{2})=(10^{-6})z_{2}$, $q=.35$, $L=1.90$, $u_{0}(x)=25$, and $%
T_{critical}=980$, $x^{\star }=0$, $t_{final}=1000$, and set $F(Z_{1},Z_{2})=%
\mathbf{1}\{u(t_{final},x^{\star };Z_{1},Z_{2})\geq T_{critical}\}$. For the
random variables, we introduce two independent random variables $\tilde{Z}%
_{1}\sim $ beta$(\alpha =5,\beta =5),\ \tilde{Z}_{2}\sim $ beta$(\alpha
=5,\beta =5)$ and let $Z_{1}=(2\times 10^{-3})\tilde{Z}_{1}\ +\ 3\times
10^{-3},$ $Z_{2}=(0.11)\tilde{Z}_{2}+0.30$. Figures \ref{ex3_a} and \ref%
{ex3_b} plot the risk sensitive integrals as a function of $c$ and
illustrate $c\rightarrow \frac{1}{c}(B+\Lambda _{c}^{i})$, where $B=R\left(
\theta (dz_{2})\left\Vert \gamma (dz_{2})\right. \right) \approx .0587$. and
with $\theta (dz_{2})\sim $ beta$(\alpha =10,\beta =10)$. Again, we use the
Gauss-Jacobi quadrature points and weights.


\begin{figure}[h!]
\begin{center}
\includegraphics[width=4.47in,height=2.45in]{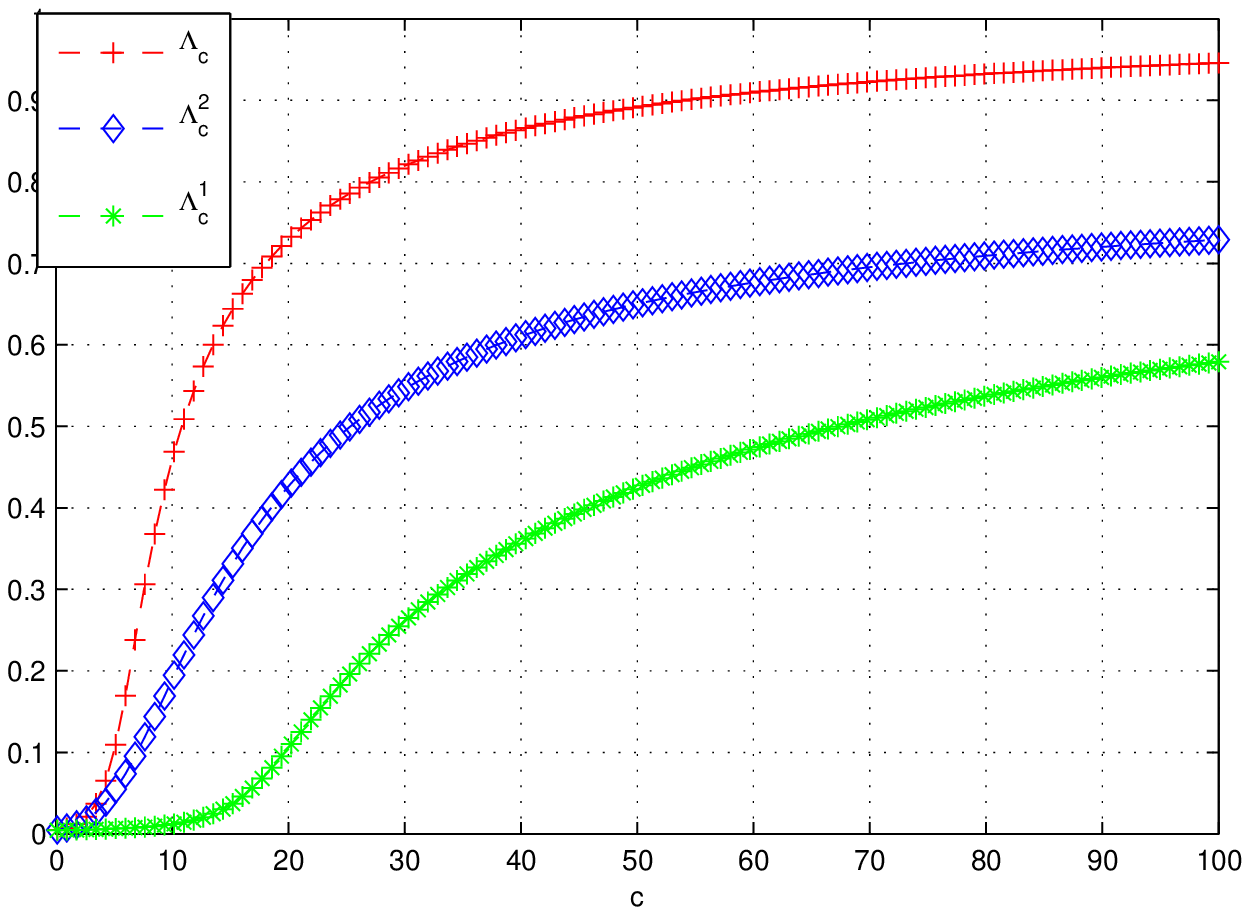}
\end{center}
\caption{
Example \protect\ref{example3},
risk sensitive integrals as a function of $c$. 
}
\label{ex3_a}
\end{figure}


\begin{figure}[h!]
\begin{center}
\includegraphics[width=4.47in,height=2.45in]{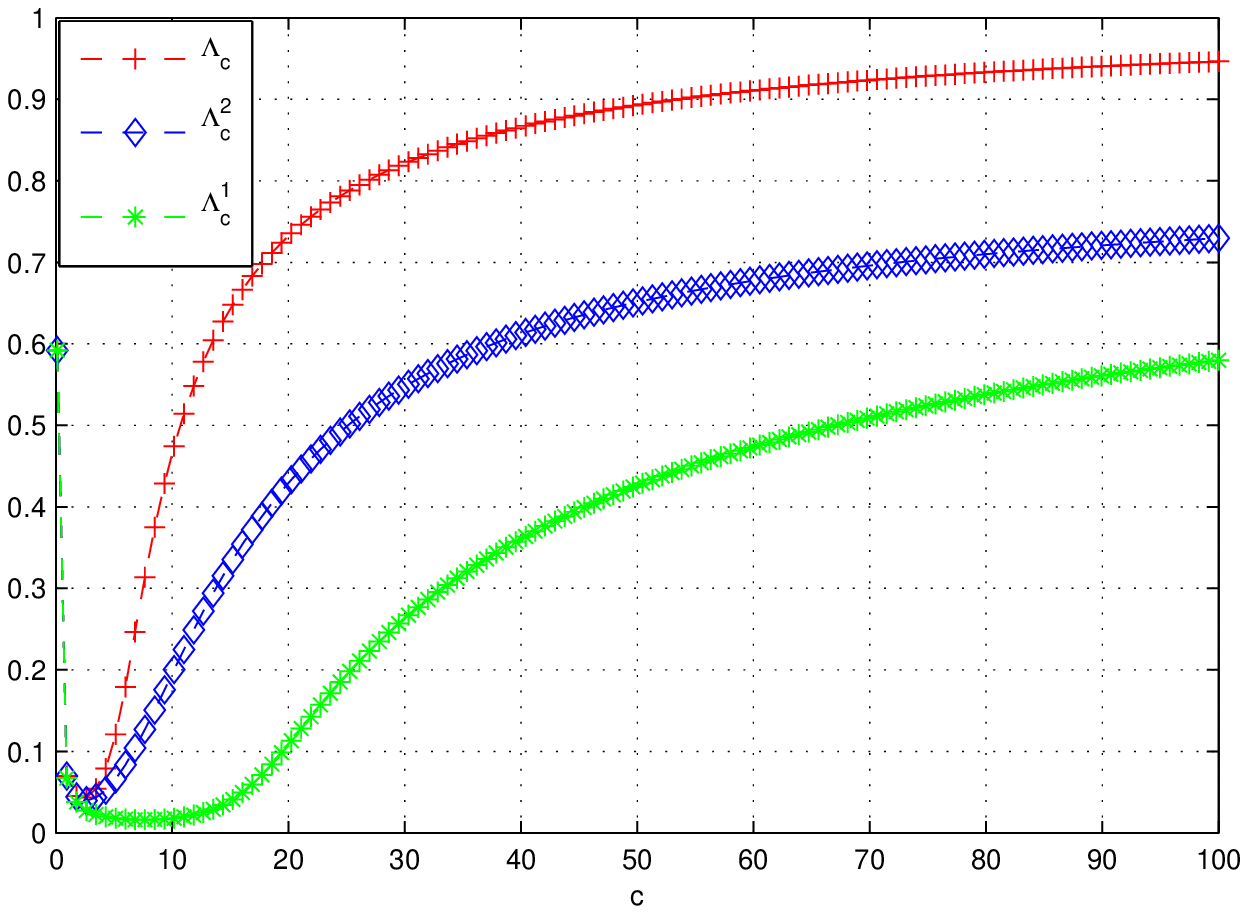}
\end{center}
\caption{
Example \protect\ref{example3},
plot of $\frac{1}{c}(B+\Lambda _{c}),\ \left\{ \frac{1}{c}(B+\Lambda
_{c}^{i})\right\} _{i=1}^{2}$. 
}
\label{ex3_b}
\end{figure}

\subsubsection{Dependent aleatoric and epistemic uncertainties}

\label{subsub:depend_uncertainties}

In this section we consider problems where the distribution of $Z_{1}$ can
depend on the value of $Z_{2}$, or vice versa. In the case when $Z_{1}$
depends on $Z_{2}$, we use the following inequality, which holds for the
alternative of the first hybrid form (see the discussion in Section \ref%
{Section:RSforms}):%
\begin{equation*}
\int_{\mathcal{Z}_{2}}\int_{\mathcal{Z}_{1}}F(z_{1},z_{2})\mu
(dz_{1}|z_{2})\theta (dz_{2})\leq \frac{1}{c}R\left( \theta
(dz_{2})\left\Vert \gamma (dz_{2})\right. \right) +\bar{\Lambda}_{c}^{1},
\end{equation*}%
where%
\begin{equation*}
\bar{\Lambda}_{c}^{1}=\frac{1}{c}\log \int_{\mathcal{Z}_{2}}\exp \left(
\int_{\mathcal{Z}_{1}}cF(z_{1},z_{2})\mu (dz_{1}|z_{2})\right) \gamma
(dz_{2})\text{.}
\end{equation*}%
Recall that the relative entropy term $\frac{1}{c}R\left( \theta
(dz_{2})\left\Vert \gamma (dz_{2})\right. \right) $ represents our lack of
knowledge about the distribution of $Z_{2}$, i.e., the epistemic uncertainty
where we want robustness, whereas $Z_{1}~$represents aleatoric uncertainty.
A question that now arises is whether or not it is still possible to use gPC
methods to evaluate this integral. Consider the example with $Z_{2}\sim
U[0,1]$ and $Z_{1}\sim N(Z_{2},1)$, so that $Z_{1}$ is Gaussian with
variance 1 and mean $Z_{2}$. Certainly $Z_{1}$ and $Z_{2}$ are not
independent random variables. However, we can introduce an auxiliary normal
random variable $Z\sim \mathcal{N}(0,1)$, and write $Z_{1}\doteq Z+Z_{2}$,
where $Z_{2}$ and $Z$ are independent random variables. Now we consider $%
G(Z,Z_{2})$ instead of $F(Z_{1},Z_{2})$, and note that $%
G(Z,Z_{2})=F(Z+Z_{2},Z_{2})$. Furthermore, $\bar{\Lambda}_{c}^{1}$ can be
written as%
\begin{equation*}
\bar{\Lambda}_{c}^{1}=\frac{1}{c}\log \int_{\mathcal{Z}_{2}}\exp \left(
\int_{\mathcal{Z}}cG(z,z_{2})\mu (dz)\right) \gamma (dz_{2}).
\end{equation*}%
Hence, $\bar{\Lambda}_{c}^{1}$ can be calculated just like $\Lambda _{c}^{1}$
via a gPC approximation. Similar results hold for the case when $Z_{2}$
depends on $Z_{1}$, but the epistemic uncertainty is in $Z_{2}$.

In general, as long as we can find a smooth, invertible mapping from $%
(Z_{1},Z_{2})$ to an independent pair of random variables, one can evaluate
the integrals as before. Figures \ref{ex1_3a} and \ref{ex1_3c} show the
performance of risk sensitive integrals for dependent random variables as
above for Example \ref{example1} and with $F(u)=\boldsymbol{1}\{1/2\leq
u\leq 1\}$.


\begin{figure}[h!]
\begin{center}
\includegraphics[width=4.47in,height=2.45in]{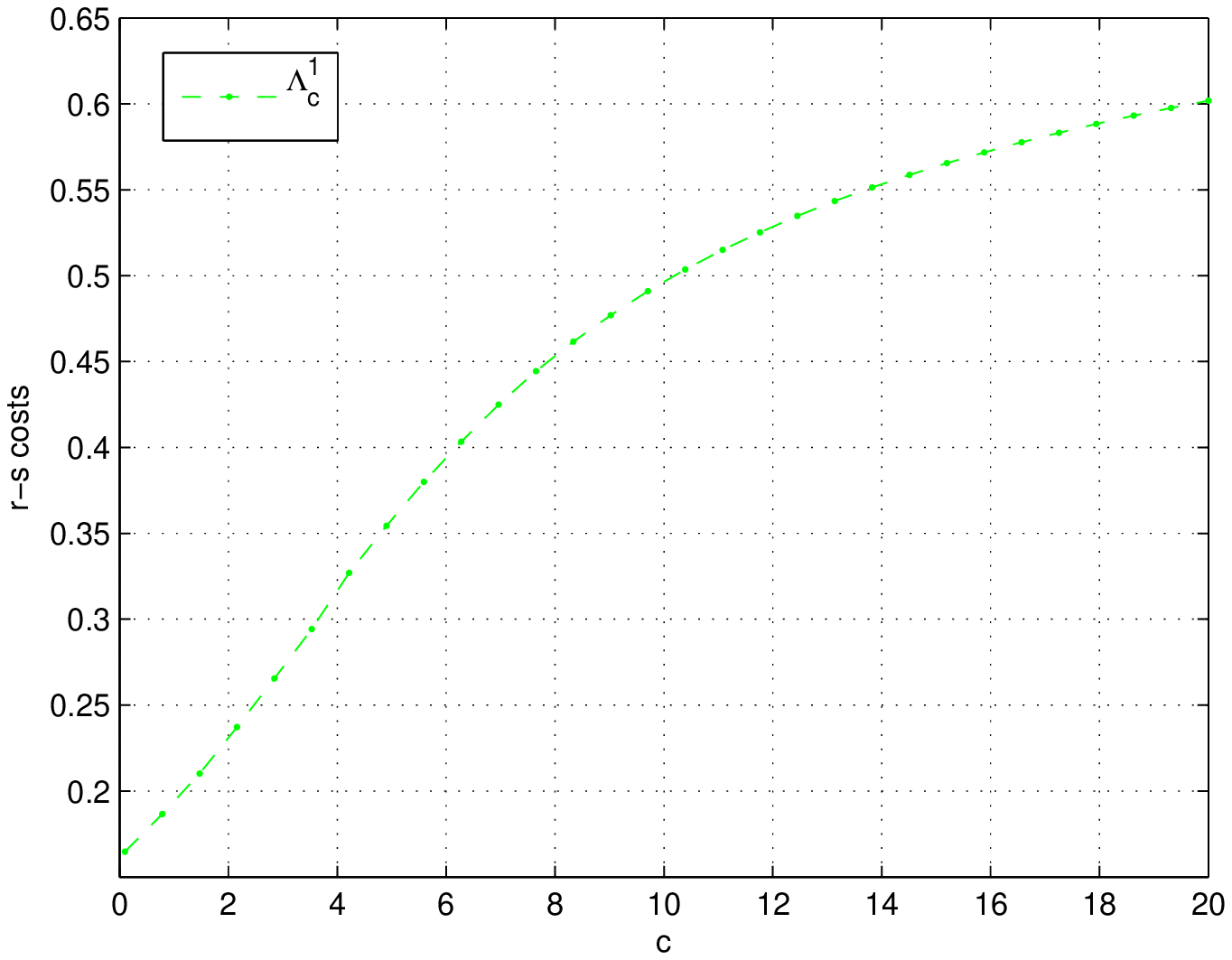}
\end{center}
\caption{
Example \protect\ref{example1}, 
$\bar{\Lambda}_{c}^{1}$ for $F(u)=\boldsymbol{1}\{1/2\leq u\leq 1\}.$
}
\label{ex1_3a}
\end{figure}


\begin{figure}[h!]
\begin{center}
\includegraphics[width=4.47in,height=2.45in]{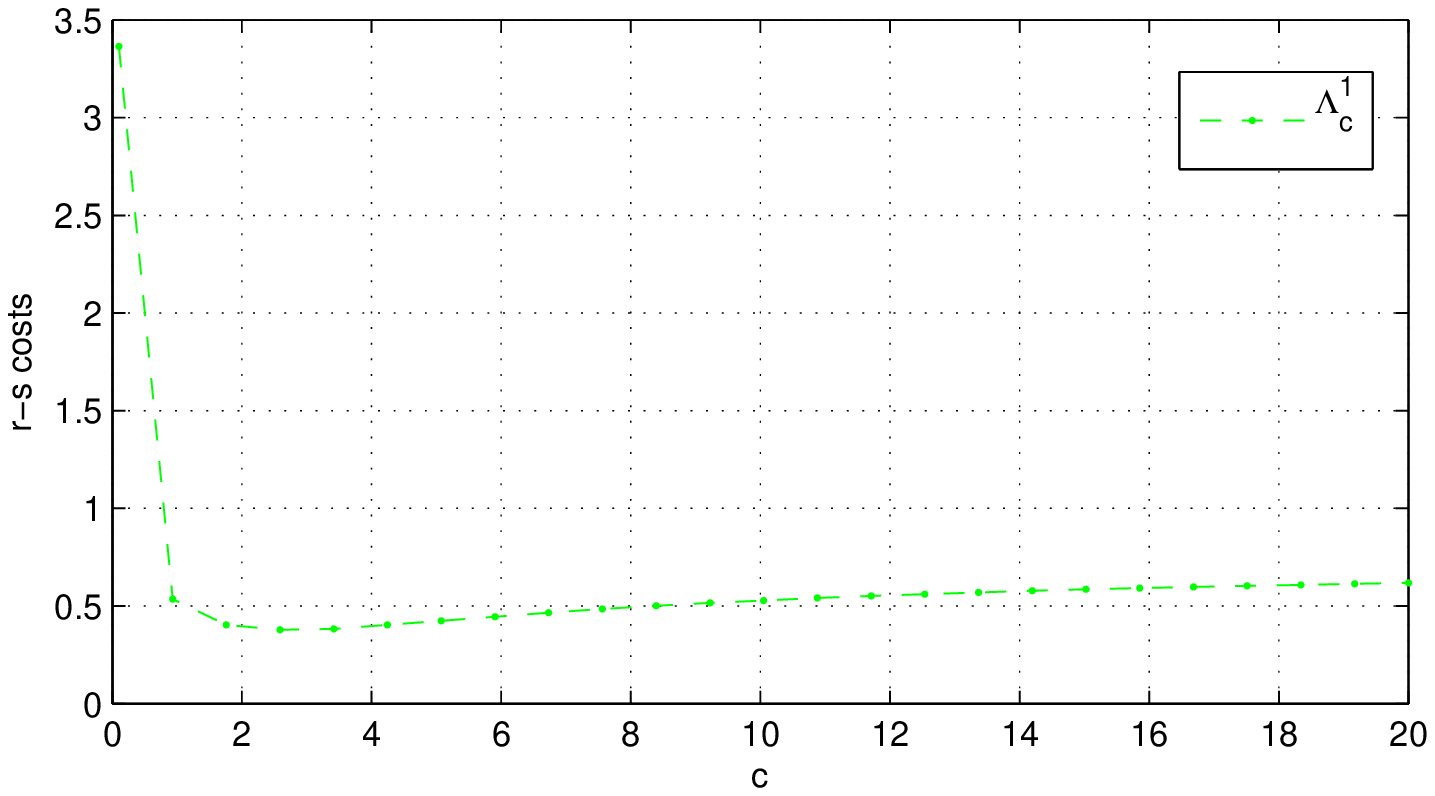}
\end{center}
\caption{
Example \protect\ref{example1},$%
\frac{1}{c}(B+\bar{\Lambda}_{c}^{1})$ $F(u)=\boldsymbol{1}\{1/2\leq u\leq
1\} $, and $\protect\theta \sim \mathcal{N(}.8,1)$.
}
\label{ex1_3c}
\end{figure}

\subsection{Discussion and Conclusion}

\bigskip

In a recent paper by Xiu et. al. \cite{xiueld}, an approach to dealing with
epistemic uncertainty via polynomial chaos methods was developed. In \cite%
{xiueld}, epistemic uncertainty meant that the true distribution of the
underlying stochastic parameters was not known precisely. Here we compare
our method of handling this type of epistemic uncertainty to theirs.

The approach taken in \cite{xiueld} is based on stochastic collocation
approximation. Specifically, the differential equation of interest was
solved on a generic collocation grid on the sample space of the random
variables. This is in contrast to standard uses of stochastic collocation
for uncertainty quantification, where the collocation grid is chosen in
accordance with the (assumed known) underlying distribution.\ Of course the
reason for the particular choice of basis polynomials and quadrature points
is to obtain optimal (spectral) convergence with respect to the $L^{2}$
weighted error. In the approach of \cite{xiueld}, regardless of what is the
\textquotedblleft true\textquotedblright\ (and assumed unknown)
distribution, one chooses grid points with respect to the $L^{2}$ weighted
norm with constant weight (e.g., Legendre-Gauss quadrature weights), even if
the original space is unbounded. One then solves the differential equation
at these collocation points and constructs the interpolating polynomial of
the approximation. Hence, the gPC approximation converges optimally in the $%
L^{2}$ norm only if the underlying distribution is uniform. For non-uniform
random variables the approximation will still converge point-wise as the
number of collocation points increases (see Section 5 in \cite{xiueld}).

Once the interpolated approximation is obtained, one can determine the mean,
variance, probabilities, and so on with respect to various underlying
distributions on the parameters via Monte Carlo or other methods. The
benefits are still large, in that one need not evaluate the differential
equation for every Monte Carlo sample, but instead evaluates a much simpler
approximation via a finite sum of polynomial basis functions. However,
information can be obtained only by fixing a choice for the underlying
distribution. Also, depending on which underlying distribution is used, the
errors in the function approximation will have a greater or smaller effect.

In this paper we have developed an approach in which performance bounds can
be calculated for the mean, variance, probabilities, and so on with respect
to all distributions that are within a particular relative entropy distance
from the nominal distribution, and the bounds are optimal for that class of
distributions. In addition to providing performance bounds on a class of
distributions defined by their relative entropy distance, the limit $%
c\rightarrow \infty $ is particularly interesting in that it gives tight
bounds on epistemic uncertainties where one assumes nothing about the
underlying distribution of certain variables except their support. In
addition, the method one can efficiently handle both aleatoric and epistemic
uncertainty simultaneously. More precisely, by computing certain hybrid
forms of a risk-sensitive expectation we obtain tight performance bounds
that distinguish between variables with known distribution and variables
with unknown distribution or other forms of epistemic uncertainty. In
particular, the scenarios to which the method applies include (i) aleatoric
with known distribution; (ii) aleatoric with partly known distribution
(mingled aleatoric and epistemic); (iii) epistemic for which one is willing
to model by a family of aleatoric uncertainties, and (iv) epistemic where
one is only willing to place bounds on the uncertainties. For all the
examples we have considered (including those presented in the paper),
approximation of the required risk-sensitive integrals can be done via gPC
methods using roughly the same computational effort as would be required to
compute the corresponding ordinary performance measures using the nominal
probability distributions.

\appendix

\section{\protect\bigskip Distribution for Polynomial Basis}

Here we list some of the most common distributions, both continuous and
discrete, and their corresponding gPC basis representations (see \cite{xiu1}%
).

\begin{equation}
\begin{tabular}{cc}
\hline
\strut \rule{0pt}{12pt}\textbf{Distribution} & \textbf{gPC polynomial basis}
\\ 
Gaussian & Hermite \\ 
Gamma & Laguerre \\ 
Beta & Jacobi \\ 
Uniform & Legendre \\ 
Poisson & Charlier \\ 
Binomial & Krawtchouk \\ 
Negative Binomial & Meixner \\ 
Hypergeometric & Hahn \\ \hline
\end{tabular}
\label{gpc_table}
\end{equation}%
\bigskip

\section{\protect\bigskip Relative Entropy Formulas}

In this section we evaluate relative entropies for some of the most common
types of distributions listed in Table \ref{gpc_table}. Since most of the
distributions listed in this table fall within the exponential family, we
start with the general form%
\begin{equation}
f_{X}(x;\boldsymbol{\theta })=h(x)\exp \left( \sum_{i=1}^{s}\eta _{i}(%
\boldsymbol{\theta }\mathbf{)}T_{i}(x)-A(\eta _{1}(\boldsymbol{\theta }%
\mathbf{)},...,\eta _{s}(\boldsymbol{\theta }\mathbf{)})\right) ,
\label{expfamily}
\end{equation}%
where $T_{i}(x),h(x),\eta _{i}(\boldsymbol{\theta })$, and $A(\boldsymbol{%
\theta })$ are known functions, $\boldsymbol{\theta }=(\theta
_{1},...,\theta _{d})^{T}$ is the parameter of the family.

\begin{example}
\emph{For Gaussian distributions }$\theta =(\mu ,\sigma )^{T}$\emph{, where }%
$\mu $\emph{\ is the mean and }$\sigma $\emph{\ is the variance. This is put
in the general form by setting}%
\begin{equation*}
\boldsymbol{\theta }=\left( \mu ,\sigma \right) ^{T},\quad \eta =\left( 
\frac{\mu }{\sigma ^{2}},\frac{-1}{2\sigma ^{2}}\right) ^{T},\quad h(x)=%
\frac{1}{\sqrt{2\pi }},\quad T(x)=(x,x^{2})^{T},
\end{equation*}%
\begin{equation*}
A(\eta (\boldsymbol{\theta }))=\frac{\mu ^{2}}{2\sigma ^{2}}+\ln |\sigma |=-%
\frac{\eta _{1}^{2}}{4\eta _{2}}+\frac{1}{2}\ln \left\vert \frac{1}{2\eta
_{2}}\right\vert ,
\end{equation*}%
\emph{producing} 
\begin{equation*}
f_{X}(x;\mu ,\sigma )=\frac{1}{\sqrt{2\pi \sigma ^{2}}}e^{-(x-\mu
)^{2}/2\sigma ^{2}}.
\end{equation*}
\end{example}

We now state the relative entropy formula for distributions which have a
probability density function given by the form of exponential family in (\ref%
{expfamily}). The relative entropy between a probability measure $P$ with
density $p(x)$ and another probability measure $Q$ with density $q(x)$ is
given by%
\begin{equation}
R(P\left\Vert Q\right. )=\int_{\Gamma }p(x)\ln \frac{p(x)}{q(x)}dx
\label{relative_entropy}
\end{equation}%
whenever $P$ is absolutely continuous with respect to $Q$, where $\Gamma $
is the support of $p$ and $q$. In all other cases it is defined to be $%
\infty $.

A simplified version of the relative entropy formula for distributions from
the same exponential family type is available. Suppose that $q(x)$ and $p(x)$
have the same $h,\eta _{i},T_{i},A$, but with different parameters $%
\boldsymbol{\theta }_{1}$ and $\boldsymbol{\theta }_{2}$ (note that $%
\boldsymbol{\theta }_{1}$ and $\boldsymbol{\theta }_{2}$ are vectors). If we
associate $q(x)$ with $\boldsymbol{\theta }_{1}$ and $p(x)$ with $%
\boldsymbol{\theta }_{2}$, then it can be shown that the relative entropy
formula reduces to%
\begin{equation}
R(P\left\Vert Q\right. )=\sum_{i=1}^{s}(\eta _{i}(\boldsymbol{\theta }%
_{2})-\eta _{i}(\boldsymbol{\theta }_{1}))\int_{\Gamma }T_{i}(x)p(x)dx-A(%
\boldsymbol{\eta }\mathbf{(}\boldsymbol{\theta }\mathbf{_{2}))}+A(%
\boldsymbol{\eta }\mathbf{(}\boldsymbol{\theta }_{1})).  \label{RE_expfam}
\end{equation}%
\medskip

\noindent \textbf{Gaussian.} If $Q=N(\mu _{1},\sigma _{1})$ and $P=N(\mu
_{2},\sigma _{2})$, then using the fact that $\int T_{1}(x)p(x)dx=\mu _{2}$
and $\int T_{2}(x)p(x)dx=\mu _{2}^{2}+\sigma _{2}^{2}$, we get%
\begin{equation}
R(P\left\Vert Q\right. )=\frac{1}{2\sigma _{1}^{2}}\left( (\mu _{1}^{2}-\mu
_{2}^{2})\ +\ (\sigma _{2}-\sigma _{1})\right) \ +\ln \frac{\sigma _{1}}{%
\sigma _{2}}\ .  \label{re_normal}
\end{equation}%
\medskip

\noindent \textbf{Beta.} For the beta distribution, the parameters in the
exponential family are given by%
\begin{equation*}
\eta =(\alpha -1,\beta -1)^{T},\ \ T=(\ln (x),\ln (1-x))^{T},\ \ h(x)=1,\ \
A=\ln \frac{\Gamma (\alpha )\Gamma (\beta )}{\Gamma (\alpha +\beta )}
\end{equation*}%
where 
\begin{equation*}
f_{X}(x;\alpha ,\beta )=\frac{\Gamma (\alpha )\Gamma (\beta )}{\Gamma
(\alpha +\beta )}x^{\alpha -1}(1-x)^{\beta -1},\qquad 0<x<1,\ \alpha ,\beta
>1.
\end{equation*}%
Another form of the beta distribution, which is more closely related to the
Jacobi\ polynomials, is given by%
\begin{equation*}
\tilde{f}_{X}(k;\tilde{\alpha},\tilde{\beta})=\frac{(1-x)^{\tilde{\alpha}%
}(1+x)^{\tilde{\beta}}}{2^{\tilde{\alpha}+\tilde{\beta}+1}B(\tilde{\alpha}+1,%
\tilde{\beta}+1)},\qquad -1<x<1,\ \tilde{\alpha},\tilde{\beta}>-1,
\end{equation*}%
where $B(\alpha ,\beta )=\Gamma (\alpha )\Gamma (\beta )/\Gamma (\alpha
+\beta )$ is the beta function. Note that the relation between the two is
simply a rescaling and a variable substitution given by $\tilde{\alpha}%
=\beta -1,\tilde{\beta}=\alpha -1$. We will use the primary form to
determine the relative entropy formula. Letting $P=$ beta$(\alpha _{2},\beta
_{2})$ and $Q=$ beta$(\alpha _{1},\beta _{1})$,%
\begin{equation*}
\int_{0}^{1}T_{1}(x)p(x)dx=\psi (\alpha _{2})-\psi (\alpha _{2}+\beta
_{2}),\ \ \int_{\Gamma }T_{2}(x)p(x)dx=\psi (\beta _{2})-\psi (\alpha
_{2}+\beta _{2}),
\end{equation*}%
where $\psi (x)$ is known as the digamma function, or the zero$^{th}$ order
of the polygamma function. Then,%
\begin{eqnarray*}
R(P\left\Vert Q\right. ) &=&(\alpha _{2}-\alpha _{1})(\psi (\alpha
_{2})-\psi (\alpha _{2}+\beta _{2})) \\
&&+\ (\beta _{2}-\beta _{1})(\psi (\beta _{2})-\psi (\alpha _{2}+\beta
_{2}))\ +\ \ln \frac{B(\alpha _{1},\beta _{1})}{B(a_{2},\beta _{2})}.
\end{eqnarray*}%
\medskip

\noindent \textbf{Gamma.} Here we have%
\begin{equation*}
\eta =(-\beta ,\alpha -1)^{T},\ \ T=(x,\ln (x))^{T},\ \ h(x)=1,\ \ A=\log
\Gamma (\alpha )-\alpha \log \beta
\end{equation*}%
If $P=$ gamma$(\alpha _{2},\beta _{2}),Q=$ gamma$(\alpha _{1},\beta _{1})$,
where $\alpha _{i},\beta _{i}$ are the so-called shape parameters, then 
\begin{equation*}
R(P\left\Vert Q\right. )=\frac{\alpha _{1}}{\beta _{1}}(\beta _{2}-\beta
_{1})+\ (\alpha _{1}-\alpha _{2})(\psi (\alpha _{1}))-\log (\beta
_{1}))+\log \frac{\Gamma (\alpha _{2})\beta _{1}^{\alpha _{1}}}{\Gamma
(\alpha _{1})\beta _{2}^{a_{2}}}\text{.}\ 
\end{equation*}%
\medskip \qquad

\noindent \textbf{Binomial.} Here 
\begin{equation*}
\eta =(-\beta ,\alpha -1)^{T},\ \ T=(x,\ln (x))^{T},\ \ h(x)=1,\ \ A=\log
\Gamma (\alpha )-\alpha \log \beta .
\end{equation*}%
If $P=$ binomial$(n,p_{2}),Q=$ binomial$(n,p_{1})$, then 
\begin{equation*}
R(P\left\Vert Q\right. )=\log \frac{p_{1}^{\mu _{1}}(1-p_{2})^{\mu _{1}-n}}{%
p_{2}^{\mu _{1}}(1-p_{1})^{\mu _{1}-n}},\qquad \mu _{1}=np_{1}.
\end{equation*}%
\medskip

\noindent \textbf{Poisson.} In this case 
\begin{equation*}
\eta =\log \lambda ,\ \ T=x,\ \ h(x)=\frac{1}{x!},\ \ A=\lambda
\end{equation*}%
If $P=$ Poisson$(\lambda _{2}),Q=$ Poisson$(\lambda _{1})$, then the
relative entropy distance is 
\begin{equation*}
R(P\left\Vert Q\right. )=\lambda _{1}-\lambda _{2}\ +\ \lambda _{2}\log 
\frac{\lambda _{2}}{\lambda _{1}}.
\end{equation*}%
\medskip

Note that relative entropy formula is invariant under shifting and scaling
of both distributions simultaneously. In fact, suppose that $\psi $ and its
inverse are both well defined and measurable, $X$ and $Y$ have distributions 
$P$ and $Q$, and that $\psi (X)$ and $\psi (Y)$ have distributions $\bar{P}$
and $\bar{Q}$. Then \cite[Lemma E.2.1]{dupell4} $R(\bar{P}\left\Vert \bar{Q}%
\right. )=R(P\left\Vert Q\right. )$. A list of relative entropy formulas
follows.%
\begin{equation*}
\begin{tabular}{l|l}
\textbf{Distribution} & $\mathbf{R((\cdot )}_{2}\left\Vert \mathbf{(\cdot )}%
_{1}\right. \mathbf{)}$ \\[0.1cm] \hline
Gaussian & $\frac{1}{2\sigma _{1}^{2}}\left[ (\mu _{1}^{2}-\mu
_{2}^{2})+(\sigma _{2}-\sigma _{1})\right] +\log \frac{\sigma _{1}}{\sigma
_{2}}\rule{0pt}{14pt}$ \\[0.15cm] 
Beta/Uniform & 
\begin{tabular}{l}
$(\alpha _{2}-\alpha _{1})\left[ \psi (\alpha _{2})-\psi (\alpha _{2}+\beta
_{2})\right] $ \\ 
$+(\beta _{2}-\beta _{1})\left[ \psi (\beta _{2})-\psi (\alpha _{2}+\beta
_{2})\right] +\log \frac{B(\alpha _{1},\beta _{1})}{B(\alpha _{2},\beta _{2})%
}$%
\end{tabular}
\\[0.4cm] 
Gamma & 
\begin{tabular}{l}
$\frac{\alpha _{1}}{\beta _{1}}(\beta _{2}-\beta _{1})+(\alpha _{1}-\alpha
_{2})\left[ \psi (\alpha _{1})-\log \beta _{1}\right] $ \\ 
$+\log \left[ \Gamma (\alpha _{2})\beta _{1}^{\alpha _{1}}\left/ \Gamma
(\alpha _{1})\beta _{2}^{\alpha _{2}}\right. \right] $%
\end{tabular}
\\[0.45cm] 
Binomial & $\log \left[ p_{1}^{\mu _{1}}(1-p_{2})^{\mu _{1}-n}\left/
p_{2}^{\mu _{1}}(1-p_{1})^{\mu _{1}-n}\right. \right] $ \\[0.15cm] 
Poisson & $\lambda _{1}-\lambda _{2}+\lambda _{2}\log \frac{\lambda _{2}}{%
\lambda _{1}}.$%
\end{tabular}%
\end{equation*}

\bigskip

\bibliographystyle{plain}
\bibliography{main}

\end{document}